\theoremstyle{plain}
    \newtheorem{thm}{Theorem}[section]
       \newtheorem{lem}{Lemma}[section]
       \newtheorem{defn}{Definition}[section]
       \newtheorem{rem}{Remark}[section]
\numberwithin{equation}{section}
\begin{document}
\title{Random attractors for a stochastic nonlocal delayed reaction-diffusion equation on a semi-infinite interval}

\author{Wenjie Hu$^{1,2}$,  Quanxin Zhu$^{1,3}$, Tom\'{a}s
Caraballo$^{4}$\footnote{Corresponding author.  E-mail address: caraball@us.es (Tom\'{a}s
Caraballo).}
\\
\small  1. The MOE-LCSM, School of Mathematics and Statistics,  Hunan Normal University,\\
\small Changsha, Hunan 410081, China\\
\small  2. Journal House, Hunan Normal University, Changsha, Hunan 410081, China\\
\small  3. the Key Laboratory of Control and Optimization of Complex Systems, College of Hunan Province,\\
\small Hunan Normal University, Changsha 410081, China.\\
\small 4 Dpto. Ecuaciones Diferenciales y An\'{a}lisis Num\'{e}rico, Facultad de Matem\'{a}ticas,\\
\small  Universidad de Sevilla, c/ Tarfia s/n, 41012-Sevilla, Spain
}

\date {}
\maketitle

\begin{abstract}
The aim of this paper is to prove  the existence and qualitative property of   random attractors  for a stochastic nonlocal delayed reaction-diffusion equation (SNDRDE) on a semi-infinite interval with a Dirichlet boundary condition on the finite end. This equation models the spatial-temporal evolution of the mature individuals for a two-stage species whose juvenile and adults both diffuse that lives on a semi-infinite domain and subject to random perturbations. By transforming the SNDRDE into a random evolution equation with delay, by means of a stationary conjugate transformation, we first establish the global existence and uniqueness of solutions to the equation, after which we show the solutions generate a random dynamical system. Then, we deduce uniform a priori estimates of the solutions  and show the existence of bounded random absorbing sets. Subsequently, we prove the pullback asymptotic compactness of the random dynamical system generated by the SNDRDE with respect to the compact open topology, and hence obtain the existence of  random attractors. At last, it is proved that the random attractor is an exponentially attracting stationary solution under appropriate conditions.
\end{abstract}

\bigskip
{\bf Key words.} {\em Random attractor, stochastic delayed reaction-diffusion equations, semi-infinite interval,  nonlocal,  age-structured population model}

\section{Introduction}
When modeling the growth of mature population  of a two-stage species (juvenile and adult, with a fixed maturation time $\tau$) whose mature individuals and immature individuals both diffuse,  one faces the following delayed reaction-diffusion equation with spatial non-locality derived in \cite{SWZ}
\begin{equation}\label{1}
 \displaystyle\frac{\partial u(t,x)}{\partial
 t}=\displaystyle  \Delta u(t,x)-\mu u(t,x)+\varepsilon\int_{\mathcal{O}} \Gamma(\alpha,x,y)f(u(t-\tau,y))\mathrm{d}y,\ (t,x)\in(0,\infty)\times D.
\end{equation}
Here, $\mathcal{O}\subseteq \mathbb{R}^N$ is the spatial domain, $u(t,x)$ stands for the total mature population at location  $x$ and time $t$. The positive constant $\mu$ represents the death rate of the mature population. $\varepsilon$ and the immature mobility constant $\alpha$ are defined by  $\varepsilon
=\displaystyle e^{-\int^r_0{d_I(a)\mathrm{d}a}}$ and $\alpha=\int^r_0{D_I(a)\mathrm{d}a}$, where $d_I(a)$, $D_I(a)$, $a\in [0, \tau]$  are the age dependent death rate and diffusion rate of the immature population of the species respectively. The diffuse kernel $\Gamma(\alpha,x,y)$ is obtained by integrating along the characteristic  based on the general model in \cite{JMO}, representing the probability  of the new born individuals located at $y$ that can survive to be matured and moved to location $x$. Generally speaking, explicit forms of $\Gamma(\alpha,x,y)$ can only be obtained for some special cases, see \cite{LSZZ}. When the spatial domain $\mathcal{O}$ is bounded with a Dirichlet boundary value condition (DBVC), existence, uniqueness and attractiveness of the positive steady state and threshold dynamics are important and have been explored by Yi and Zou  \cite{YXA}. When a Neumann boundary value condition (NBVC) is imposed,   Zhao \cite{ZZ} established the global attractiveness of the positive steady state of ($\ref{1}$) by adopting a fluctuation method.

In the real world, there are also species whose individuals live in the whole space $\mathbb{R}$  or the semi-infinite domain $\mathbb{R}_+$. In the situation $\mathcal{O}=\mathbb{R}$,  the lack of compactness of the infinite domains and the complexity of nonlocal delayed term cause the global dynamics analysis of \eqref{1} becomes quite difficult and hence, the existing works  mainly focus on the traveling wave solutions (see, for instance, \cite{SWZ,WZ,YZA}). To circumvent this difficulty, Yi et al. \cite{YCWT} made a first attempt to describe the global dynamics of model (\ref{1}) by adopting the compact open topology combined with delicate analysis of the asymptotic properties of the nonlocal term and the diffusion operator. For the species whose individuals live in a semi-infinite domain, the kernel function and the spatial domain are neither symmetric nor compact, implying the problem becomes more challenging. Recently,  Yi and Zou \cite{YZD} derived the kernel
\begin{equation}\label{2}
\Gamma(\alpha,x,y)=\frac{1}{\sqrt{4\pi \alpha}}e^{-\frac{(x-y)^2}{4\alpha}}-\frac{1}{\sqrt{4\pi \alpha}}e^{-\frac{(x+y)^2}{4\alpha}},
\end{equation}
in the scenario $\mathcal{O}=\mathbb{R}_+=[0, \infty)$ with the homogeneous DBVC at the finite end, and investigated global dynamics of  Eq. \eqref{1}. Hu and Duan \cite{6} discussed the global asymptotic behavior of solutions to (\ref{1}) on a half plane   with a NBVC.

However, the  evolution of the mature population is inevitably affected by random perturbations,  including the noise generated by the internal self excitation of the system and the random interference of the external environment. Consequently, for the species living on $\mathcal{O}=\mathbb{R}_+$ that are perturbed by some random effects, a more accurate mathematical model should be the following stochastic nonlocal delayed reaction-diffusion equation (SNDRDE),
\begin{equation}\label{3}
 \displaystyle\frac{\partial u}{\partial
 t}(t,x)= \Delta u(t,x)-\mu u(t,x)+\varepsilon\int_{\mathcal{O}} \Gamma(\alpha,x,y)f(u(t-\tau,y))\mathrm{d}y+\sum_{j=1}^{m} g_{j}(x)\frac{d w_{j}}{dt},\ (t,x)\in(0,\infty)\times \mathbb{R}_+,
\end{equation}
which is obtained by adding an additive noise $\sum_{j=1}^{m} g_{j}(x)\frac{\mathrm{d} w_{j}}{\mathrm{d}t}$ to \eqref{1}. Here, $ \Gamma(\alpha,x,y)$ is defined by \eqref{2}, $\{g_j(x)\}_{j=1}^{m}$ are twice continuously differentiable on $(0,\infty)$, standing  for the intensity and the shape of noise, $\{w_j\}_{j=1}^{m}$ are mutually independent two-sided real-valued Wiener process on an appropriate probability space to be specified later.

In order to obtain the  global complex dynamics and nonlocal analysis of the qualitative properties of random dynamical systems,  Crauel and Flandoli proposed the concept of random attractors for infinite dimensional random system in \cite{9,10,FS}, by generalizing the theory of global attractors of infinite dimensional dissipative systems. Since then,  the existence, finite dimensionality and structure of random attractors for various stochastic nonlinear evolution equations or stochastic functional differential equations have been  extensively and intensively investigated  by adopting the framework in \cite{9,10}. For example, for the stochastic reaction-diffusion equation without time delay, Caraballo et al. \cite{11}, Gao et al. \cite{13} and Li and Guo \cite{12} explored the existence of global attractors on bounded domains. For the stochastic delayed reaction-diffusion equation on bounded domains, the existence of random attractors and  their structure have been studied in \cite{17,16,18,LG,25}.

In our recent works \cite{HZ} and \cite{HZ20}, we have obtained the existence, uniqueness and stability of solutions to \eqref{1} as well as the existence of random attractors when the domain $\mathcal{O}$ is bounded with a DBVC. Therefore, similar questions arise naturally, i.e. under what conditions does  \eqref{3} admit a unique global solution? Under what conditions does  \eqref{3} generate a random dynamical system? Under what conditions does  \eqref{3} possess random attractors? Moreover, under what conditions is  the attractor of \eqref{3}   a random fixed point? In the recent works \cite{24}, \cite{15}  and \cite{14}, the authors obtained the existence of global attractors for stochastic reaction-diffusion equations on unbounded domains. The unboundedness of the domain  causes the Sobolev embeddings  are no longer compact and the asymptotic compactness of solutions cannot be obtained by a standard method. Therefore, in order to overcome the difficulty caused by the unboundedness of the domain, \cite{24} established uniform estimates on the far-field values of solutions. Nevertheless, it follows from \eqref{2} that the kernel $\Gamma(\alpha,x,y)$ is asymmetric and the domain is non-symmetric and noncompact which together with the time delay, imply that the analysis of the long time behavior of solutions to \eqref{3} on the semi-infinite interval $\mathbb{R}_+=[0, \infty)$ is more difficult. This motivates us to establish a new method to analyze the asymptotic behavior of the following stochastic initial boundary value problem
\begin{equation}\label{4}
\left\{\begin{array}{l}
\frac{\partial u}{\partial t}(t,x)= \Delta u(t,x)-\mu u(t,x)+\varepsilon\int_{\mathbb{R}_{+}} \Gamma(\alpha,x,y)f(u(t-\tau,y))\mathrm{d}y+\sum_{j=1}^{m}g_{j}(x)\frac{d w_{j}}{dt},\\
u(t, 0)=0, \quad t>0, \\
u(t, x)=\phi(t, x), \quad(t, x) \in[-\tau, 0] \times \mathbb{R}_{+}.
\end{array}\right.
\end{equation}

In the case of deterministic equations, to obtain the  global dynamics of \eqref{1}, Yi and Zou \cite{YZD} established a priori estimates for nontrivial solutions by exploring the asymptotic properties of the nonlocal delayed effect and the diffusion operator. This also has been adopted by Hu et al \cite{6, HDZ} to explore the global dynamics of  some nonlocal delayed differential equations on different half spaces with various boundary conditions. Introducing random factors cause the analysis of the  asymptotic behavior of \eqref{4} be quite different from the deterministic case since the existence and uniqueness of global solution to SNDRDE \eqref{4}, and whether it generates a random dynamic system are not so natural as in the deterministic case. Moreover, the framework to deal with random attractors is also quite different from that of the deterministic case.   In this paper, we carry out a first attempt to extend the method of exploring the  asymptotic properties of the deterministic nonlocal delayed effect and the diffusion operator to the random case,  and prove the existence and qualitative property of  random attractors for the SNDRDE \eqref{4}  on the unbounded domain $\mathbb{R}_{+}$. Unlike the previous works \cite{17,16,18,LG,25}, where the phase space is a Hilbert space, we need to work here with a Banach space as natural phase space. Due to the lack of an inner product, we prove the existence of a global solution and obtain uniform a priori estimates of the solution by using the semigroup approach together with a careful analysis of the diffusion operator instead of taking inner product. Moreover, to overcome the difficulty caused by the noncompactness of the spatial domain, we also adopt the compact open topology to describe the  asymptotic behavior. It is clear that our method can be used for a variety of other equations on half spaces, as it was done for the deterministic case \cite{HDZ}.

The remaining part of this paper is structured as follows. In Section 2, we recall some basic results from the theory of random dynamical systems and random attractors as well as some notation and preliminary lemmas needed for the proof of our main results. In Section 3, by means of the Ornstein-Uhlenbeck (O-U) process, we first transform  SNDRDE \eqref{4} into a random partial differential equation with delay, and we then show that SNDRDE \eqref{4} has  global solutions by the Banach fixed point theorem together with the properties of the semigroup generated by the linear part of  \eqref{4}. Furthermore,  we show that solutions to \eqref{4}  generate a random dynamical system.  To prove the existence of random attractor for  SNDRDE \eqref{4}, we first establish  uniform a priori estimates of the solutions in Section 4, and we then show the asymptotic compactness of random dynamical systems generated by \eqref{2}  with respect to the compact open topology, implying the existence of random attractors by the results in  \cite{9,10}. In Section 5, we derive sufficient conditions  ensuring   the random attractor becomes an exponentially attracting stationary solution. At last, we summarize the paper by pointing out some potential directions deserving further  research.
\section{Preliminaries}
We first recall some notation to be used throughout this paper, and then we introduce the theory of random dynamical systems as well as random attractors. Denote by $BUC\left(\mathbb{R}_{+}, \mathbb{R}\right)$ the set of all bounded and uniformly continuous functions from $\mathbb{R}_{+}$ to $\mathbb{R}$. Denote by $\mathcal{C}=C([-\tau, 0], X)$ the set of all  continuously functions from $[-\tau, 0]$ to $X$  equipped with the usual supremum norm  $\|\varphi\|_{\mathcal{C}} =\sup\{\| \varphi(\xi)\|_{X} :\xi \in [-\tau,0]\}$ for any $\varphi\in \mathcal{C}$. For any real interval $J\subseteq\mathbb{R}$, set $J+[-\tau,0]=\{t+\xi:t\in I \ \mathrm{and}  \ \xi\in[-\tau,0] \}$. For any $ u:(J+[-\tau,0])\rightarrow X $ and $ t\in J$, we define $ u_{t}(\cdot)\in \mathcal{C }$ by $ u_{t}(\xi)=u(t+\xi)$ for all $\xi\in[-\tau,0]$.

In the sequel, we introduce the concept of random attractor and random dynamical system following \cite{AL} and  \cite{9,10,FS}.
\begin{defn}\label{defn1}
Let $\left\{\theta_{t}: \Omega \rightarrow \Omega, t \in \mathbb{R}\right\}$ be a family of measure preserving transformations such that $(t, \omega) \mapsto \theta_{t} \omega$ is measurable and $\theta_{0}=\mathrm{id}$, $\theta_{t+s}=\theta_{t} \theta_{s},$ for all $s, t \in \mathbb{R}$. The flow $\theta_{t}$ together with the probability space $\left(\Omega, \mathcal{F}, P,\left(\theta_{t}\right)_{t \in \mathbb{R}}\right)$ is called a metric dynamical system.
\end{defn}
 For a given complete separable metric space $(X, d)$,  denote by $\mathcal{B}(X)$ the  Borel-algebra of open subsets in $X$.
\begin{defn}\label{defn2}
A mapping $\Phi: \mathbb{R}^{+} \times \Omega \times X   \rightarrow X$ is said to be a random dynamical system (RDS) on a complete separable metric space  $(X,d)$ with Borel  $\sigma$-algebra  $\mathcal{B}(X)$ over the metric dynamical system $\left(\Omega, \mathcal{F}, P,\left(\theta_{t}\right)_{t \in \mathbb{R}}\right)$ if \\
(i) $\Phi(\cdot, \cdot, \cdot): \mathbb{R}^{+} \times \Omega \times X   \rightarrow X$ is $(\mathcal{B}(\mathbb{R}^{+})\times \mathcal{F}\times\mathcal{B}(X), \mathcal{B}(X))$-measurable;\\
(ii) $\Phi(0, \omega,\cdot)$ is the identity on  $X$ for $P$-a.e. $\omega \in \Omega$;\\
(iii) $\Phi(t+s, \omega,\cdot)=\Phi(t, \theta_{s} \omega,\cdot) \circ \Phi(s, \omega,\cdot),   \text { for all } t, s \in \mathbb{R}^{+}$ and $P$-a.e. $\omega \in \Omega$.\\
A RDS $\Phi$ is continuous or differentiable if $\Phi(t, \omega,\cdot): X \rightarrow X$ is continuous or differentiable for all $t\in \mathbb{R}^+$ and $P$-a.e. $\omega \in \Omega$.
\end{defn}
\begin{defn}\label{defn3}A set-valued map $\Omega \ni \omega \mapsto D(\omega) \in 2^{X}$ is said to be a random set in $X$ if the mapping $\omega \mapsto d(x, D(\omega))$ is $(\mathcal{F}, \mathcal{B}(\mathbb{R}))$-measurable for any $x \in X,$ where $d(x, D(\omega))\triangleq \inf _{y\in  D(\omega)} \mathrm{d}(x, y)$ is the distance in $X$ between the element $x$ and the set $D(\omega)  \subset X$.
\end{defn}
\begin{defn}\label{defn4}A random set $\{D(\omega)\}_{\omega \in \Omega}$ of $X$ is called tempered with respect to $\{\theta_{t}\}_{t \in \mathbb{R}}$ if for $P$-a.e. $\omega \in \Omega$,
$$
\lim _{t \rightarrow \infty} e^{-\beta t} d\left(D\left(\theta_{-t} \omega\right)\right)=0, \quad \text { for all } \beta>0,
$$
where $d(D)=\sup _{x \in D}\|x\|_{X}$.
\end{defn}

\begin{defn}\label{defn5}
Let $\mathcal{D}=\{D(\omega)\subset X, \omega\in\Omega\}$ be a family of random set.  A random set $K(\omega) \in \mathcal{D}$ is said to be a $\mathcal{D}$-pullback absorbing set for $\Phi$ if for $P$-a.e. $\omega \in \Omega$ and for every $B \in \mathcal{D},$ there exists $T=T(B, \omega)>0$ such that
\[
\Phi\left(t, \theta_{-t} \omega,B\left(\theta_{-t} \omega\right)\right)  \subseteq K(\omega) \quad \quad \text { for all } t \geq T.
\]
If, in addition, for all  $\omega \in \Omega, K(\omega)$ is a closed nonempty subset of $X$ and $K(\omega)$ is measurable in $\Omega$ with respect to $\mathcal{F},$ then we say $K$ is a closed measurable $\mathcal{D}$-pullback absorbing set for $\Phi$.
\end{defn}

\begin{defn}\label{defn6}
 A RDS $\Phi$ is said to be $\mathcal{D}$-pullback asymptotically compact in $X$ if for $P$-a.e. $\omega \in \Omega$, $\left\{\Phi\left(t_{n}, \theta_{-t_{n}} \omega, x_{n}\right)\right\}_{n \geq 1}$ has a convergent subsequence in $X$ whenever $t_{n} \rightarrow \infty$ and $x_{n} \in D\left(\theta_{-t_{n}} \omega\right)$ for any given $D \in \mathcal{D}$.
\end{defn}

\begin{defn}\label{defn7}
A compact random set $\mathcal{A}(\omega)$ is said to be a $\mathcal{D}$-pullback random attractor associated to the RDS
$\Phi$ if it satisfies the invariance property
$$\Phi(t, \omega) \mathcal{A}(\omega)=\mathcal{A}\left(\theta_{t} \omega\right), \quad \text { for all } t \geq 0, $$
and the pullback attracting property
\[
\lim _{t \rightarrow \infty} \operatorname{dist}\left(\Phi\left(t, \theta_{-t} \omega\right)D\left(\theta_{-t} \omega\right), \mathcal{A}(\omega)\right)=0, \quad \text { for all } t \geq 0, D \in \mathcal{D}, P-a.e.\  \omega\in \Omega.
\]
where  $\operatorname{dist} (\cdot, \cdot)$ denotes the Hausdorff semidistance
\[
\operatorname{dist}(A, B)=\sup _{x \in A} \inf _{y\in  B} \mathrm{d}(x, y), \quad A, B \subset  X.
\]
\end{defn}

\begin{lem}\label{lem1}
Let $(\theta, \Phi)$ be a continuous random dynamical system. Suppose that $\Phi$ is $\mathcal{D}$-pullback asymptotically compact and has a closed pullback $\mathcal{D}$-absorbing set $K=\{K(\omega)\}_{\omega \in \Omega} \in \mathcal{D}$. Then it possesses a random attractor $\{\mathcal{A}(\omega)\}_{\omega \in \Omega},$ where
$$
\mathcal{A}(\omega)=\cap_{\tau \geq 0} \overline{\cup_{t \geq \tau} \Phi\left(t, \theta_{-t} \omega, K\left(\theta_{-t} \omega\right)\right)}.
$$
\end{lem}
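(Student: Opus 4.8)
The plan is to carry out the by-now classical construction of Crauel--Flandoli and Flandoli--Schmalfuss \cite{9,10,FS}: one checks that the set
\[
\mathcal{A}(\omega)=\bigcap_{\tau\ge 0}\overline{\bigcup_{t\ge\tau}\Phi\bigl(t,\theta_{-t}\omega,K(\theta_{-t}\omega)\bigr)}
\]
exhibited in the statement is a nonempty compact random set which is strictly invariant and pullback attracts every $D\in\mathcal{D}$, and then invokes Definition \ref{defn7}. The first, purely bookkeeping, step is the characterization of this $\omega$-limit-type set: $y\in\mathcal{A}(\omega)$ if and only if there are sequences $t_n\to\infty$ and $x_n\in K(\theta_{-t_n}\omega)$ with $\Phi(t_n,\theta_{-t_n}\omega,x_n)\to y$ in $X$; this is immediate from the definition, using that $K(\omega)$ is nonempty.

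For nonemptiness and compactness: choosing any $x_n\in K(\theta_{-t_n}\omega)$ with $t_n\to\infty$ and using $K\in\mathcal{D}$, the $\mathcal{D}$-pullback asymptotic compactness of $\Phi$ (Definition \ref{defn6}) yields a subsequence along which $\Phi(t_n,\theta_{-t_n}\omega,x_n)$ converges, and the limit lies in $\mathcal{A}(\omega)$ by the characterization, so $\mathcal{A}(\omega)\neq\emptyset$; it is closed as an intersection of closed sets. For precompactness, take $\{y_j\}\subset\mathcal{A}(\omega)$, approximate each $y_j$ to within $1/j$ by some $\Phi(\sigma_j,\theta_{-\sigma_j}\omega,\xi_j)$ with $\sigma_j\ge j$ and $\xi_j\in K(\theta_{-\sigma_j}\omega)$, and apply asymptotic compactness to the diagonal sequence $\{\Phi(\sigma_j,\theta_{-\sigma_j}\omega,\xi_j)\}_j$ to extract a Cauchy subsequence of $\{y_j\}$; hence $\mathcal{A}(\omega)$ is compact.

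For the pullback attracting property I would argue by contradiction: were it to fail for some $D\in\mathcal{D}$, there would exist $\varepsilon>0$, $t_n\to\infty$ and $x_n\in D(\theta_{-t_n}\omega)$ with $\operatorname{dist}\bigl(\Phi(t_n,\theta_{-t_n}\omega,x_n),\mathcal{A}(\omega)\bigr)\ge\varepsilon$ for all $n$; by asymptotic compactness a subsequence of $\Phi(t_n,\theta_{-t_n}\omega,x_n)$ converges to some $y$, and it remains to show $y\in\mathcal{A}(\omega)$. Fixing $\tau\ge 0$, the cocycle property together with $\theta_{t_n-\tau}\theta_{-t_n}=\theta_{-\tau}$ gives, for $n$ large,
\[
\Phi(t_n,\theta_{-t_n}\omega,x_n)=\Phi\Bigl(\tau,\theta_{-\tau}\omega,\;\Phi\bigl(t_n-\tau,\theta_{-(t_n-\tau)}(\theta_{-\tau}\omega),x_n\bigr)\Bigr),
\]
and since $x_n\in D(\theta_{-(t_n-\tau)}(\theta_{-\tau}\omega))$ with $t_n-\tau\to\infty$, the absorbing property (Definition \ref{defn5}) with base point $\theta_{-\tau}\omega$ forces the inner term into $K(\theta_{-\tau}\omega)$ for $n$ large; hence $\Phi(t_n,\theta_{-t_n}\omega,x_n)\in\Phi(\tau,\theta_{-\tau}\omega,K(\theta_{-\tau}\omega))$ eventually, so $y\in\overline{\bigcup_{t\ge\tau}\Phi(t,\theta_{-t}\omega,K(\theta_{-t}\omega))}$. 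As $\tau\ge 0$ is arbitrary, $y\in\mathcal{A}(\omega)$, contradicting $\operatorname{dist}(\cdot,\mathcal{A}(\omega))\ge\varepsilon$.

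It remains to prove invariance and measurability. For the forward inclusion, if $y=\lim\Phi(t_n,\theta_{-t_n}\omega,x_n)\in\mathcal{A}(\omega)$ with $x_n\in K(\theta_{-t_n}\omega)$, then continuity of $\Phi(t,\omega,\cdot)$ and the cocycle law give $\Phi(t,\omega,y)=\lim\Phi\bigl(t+t_n,\theta_{-(t+t_n)}(\theta_t\omega),x_n\bigr)$ with $x_n\in K(\theta_{-(t+t_n)}(\theta_t\omega))$, so $\Phi(t,\omega,y)\in\mathcal{A}(\theta_t\omega)$ by the characterization relative to $\theta_t\omega$; thus $\Phi(t,\omega)\mathcal{A}(\omega)\subseteq\mathcal{A}(\theta_t\omega)$. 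The reverse inclusion is the most delicate point: given $z\in\mathcal{A}(\theta_t\omega)$, write $z=\lim\Phi(s_n,\theta_{-s_n}(\theta_t\omega),z_n)$ with $z_n\in K(\theta_{-s_n}(\theta_t\omega))$ and $s_n\to\infty$; peeling off the final $s_n-t$ units of time exhibits this as $\Phi(t,\omega,w_n)$ with $w_n=\Phi(s_n-t,\theta_{-(s_n-t)}\omega,z_n)$ and $z_n\in K(\theta_{-(s_n-t)}\omega)$, so a further application of $\mathcal{D}$-pullback asymptotic compactness produces $w_{n_k}\to w\in\mathcal{A}(\omega)$, and continuity yields $z=\Phi(t,\omega,w)\in\Phi(t,\omega)\mathcal{A}(\omega)$. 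Finally, that $\omega\mapsto\mathcal{A}(\omega)$ is a compact random set follows from the general theory of \cite{9,10,FS}, using that $\Phi$ is a measurable RDS with almost surely continuous trajectories, $K$ is a measurable random set, and $\mathcal{A}(\omega)$ is realized as the $\omega$-limit set of $K$. Apart from this second use of asymptotic compactness in the reverse invariance and the measurability claim, everything is cocycle bookkeeping, and I expect the reverse invariance to be the step requiring the most care.
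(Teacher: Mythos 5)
The paper does not prove this lemma: it is quoted verbatim from the Crauel--Flandoli theory (\cite{9,10,FS}) as a known existence criterion, so there is no in-paper argument to compare against. Your reconstruction is the standard proof of that criterion and is correct in all essentials: the sequential characterization of the $\omega$-limit set, the diagonal argument for compactness, the contradiction argument for attraction using the cocycle identity $\Phi(t_n,\theta_{-t_n}\omega,\cdot)=\Phi(\tau,\theta_{-\tau}\omega,\cdot)\circ\Phi(t_n-\tau,\theta_{-(t_n-\tau)}\theta_{-\tau}\omega,\cdot)$ together with absorption at the base point $\theta_{-\tau}\omega$, and the two inclusions for invariance (with the second use of asymptotic compactness applied to $K\in\mathcal{D}$ itself) are exactly the steps of the classical argument. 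Two small points you should acknowledge if writing this out in full: the absorbing and asymptotic-compactness hypotheses hold only for $P$-a.e.\ $\omega$, so using them at every shifted fibre $\theta_{-\tau}\omega$ requires restricting to a $\theta$-invariant set of full measure; and the measurability of $\omega\mapsto\mathcal{A}(\omega)$, which you defer to the general theory, is the one step that genuinely needs the separability of $X$ (via $d(x,\mathcal{A}(\omega))=\lim_{\tau\to\infty}\inf_{t\ge\tau}\,d\bigl(x,\Phi(t,\theta_{-t}\omega,K(\theta_{-t}\omega))\bigr)$ and a countable dense set) rather than just cocycle bookkeeping.
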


For convenience, we introduce the following Gr{o}nwall inequality in \cite{17} that will be frequently used in our subsequent proofs.
\begin{lem}\label{lem2}
Let $T>0$ and $u, \alpha, f$ and $g$ be non-negative continuous functions defined on $[0, T]$ such that
\[
u(t) \leq \alpha(t)+f(t) \int_{0}^{t} g(r) u(r) d r, \quad \text { for } t \in[0, T].
\]
Then
\[
u(t) \leq \alpha(t)+f(t) \int_{0}^{t} g(r) \alpha(r) e^{\int_{r}^{t} f(\tau) g(\tau) d \tau} d r, \quad \text { for } t \in[0, T].
\]
\end{lem}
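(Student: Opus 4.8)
The plan is to reduce the integral inequality to a first-order linear differential inequality for the ``running integral'' term and then apply the classical integrating-factor argument.

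First I would introduce the auxiliary function $v(t):=\int_{0}^{t} g(r)u(r)\,\mathrm{d}r$, which is continuously differentiable on $[0,T]$ with $v(0)=0$ and $v'(t)=g(t)u(t)$. Substituting the hypothesis $u(t)\le \alpha(t)+f(t)v(t)$ and using $g\ge 0$ gives the pointwise estimate
\[
v'(t)-g(t)f(t)\,v(t)\le g(t)\alpha(t),\qquad t\in[0,T].
\]
Next, I would multiply through by the positive, continuously differentiable integrating factor $\mu(t):=\exp\!\big(-\int_{0}^{t} g(s)f(s)\,\mathrm{d}s\big)$ and observe that $(\mu v)'(t)=\mu(t)\big(v'(t)-g(t)f(t)v(t)\big)\le \mu(t)g(t)\alpha(t)$. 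Integrating this over $[0,t]$ and using $v(0)=0$ yields
\[
\mu(t)\,v(t)\le \int_{0}^{t}\mu(r)g(r)\alpha(r)\,\mathrm{d}r,
\]
hence $v(t)\le \int_{0}^{t} g(r)\alpha(r)\exp\!\big(\int_{r}^{t} f(\tau)g(\tau)\,\mathrm{d}\tau\big)\,\mathrm{d}r$. Feeding this bound back into $u(t)\le \alpha(t)+f(t)v(t)$ then gives exactly the asserted inequality.

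The argument presents no genuine obstacle; the only point deserving a little care is that $f$ is not assumed to be monotone, so one cannot simply replace $f$ by $\sup_{[0,T]}f$ and invoke the standard Gr\"onwall lemma while retaining the sharp stated form — treating the product $gf$ as a single coefficient in the linear differential inequality for $v$ circumvents this. The nonnegativity of $g$ is used to preserve the direction of the inequality when substituting the hypothesis, while the nonnegativity of $\alpha$ and $f$ mainly ensures that the quantities appearing in the conclusion are the natural ones; continuity of $u,\alpha,f,g$ suffices to justify the differentiation of $v$ and the absolute continuity of $\mu v$ needed in the integration step.
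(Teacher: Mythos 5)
Your proof is correct and complete: introducing $v(t)=\int_0^t g(r)u(r)\,\mathrm{d}r$, passing to the differential inequality $v'-gfv\le g\alpha$, and applying the integrating factor $\exp\bigl(-\int_0^t gf\bigr)$ yields exactly the stated bound, and you correctly flag that the nonnegativity of $g$ is what licenses substituting the hypothesis into $v'=gu$. The paper itself offers no proof of this lemma (it is quoted from the reference of Bessaih, Garrido-Atienza and Schmalfuss), so there is nothing to compare against; your argument is the standard one and fills that gap.
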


\section{Global solutions and random dynamical systems}
In this section, we will prove the existence of  global solutions to SNDRDE \eqref{4} under the given initial condition, and then show that the solutions generate  a random dynamical system. By the Fourier sine transform  defined by Eq. (10.5.39)  in  \cite{HR}, we can obtain that the semigroup $S(t)$ generated by the linear system
 \begin{equation}\label{3.1}
\left\{\begin{array}{l}
\frac{\partial u}{\partial t}=\Delta u-\mu u, \quad t>0 \\
u(t, 0)=0, \quad t \geq 0 \\
u(0, x)=\phi(x), \quad x \in \mathbb{R}_{+}
\end{array}\right.
 \end{equation}
is
 \begin{equation}\label{3.2}
\left\{\begin{array}{l}
S(0)[\phi](x)=\phi(x), \\
S(t)[\phi](x)=\frac{\exp (-\mu t)}{\sqrt{4 \pi t}} \int_{0}^{\infty} \phi(y)\left[\exp \left(-\frac{(x-y)^{2}}{4 t}\right)-\exp \left(-\frac{(x+y)^{2}}{4 t}\right)\right] \mathrm{dy}, \quad  t>0,
\end{array}\right.
 \end{equation}
for $(x, \phi) \in \mathbb{R}_{+} \times X$. Let $Z=BUC(\mathbb{R}, \mathbb{R})$ be the set of all bounded and uniformly continuous functions from $\mathbb{R}$ to $\mathbb{R}$ equipped with the usual supremum norm $\|\cdot\|_{Z}$. Then, the  Fourier transformation method indicates that the semigroup  $U(t): Z \rightarrow Z$ generated by $\Delta-\mu I$ is defined as
 \begin{equation}\label{3.3}
\left\{\begin{array}{l}
U(0)[\phi](x)=\phi(x), \\
U(t)[\phi](x)=\frac{\exp (-\mu t)}{\sqrt{4 \pi t}} \int_{-\infty}^{\infty} \phi(y) \exp \left(-\frac{(x-y)^{2}}{4 t}\right) \text { dy for all } t \in(0, \infty),
\end{array}\right.
 \end{equation}
for $(x, \phi) \in \mathbb{R} \times Z$, which is  analytic and strongly continuous on $Z$.

We introduce the following results concerning the properties of semigroup $S(t)$, which is frequently used thorough the whole paper. The details of the proof can be found in \cite{YZD} Lemma 2.1.
\begin{lem}\label{lem2.1}
Let $S(t)$ and $U(t)$ be defined in \eqref{3.2} and \eqref{3.3} respectively, then we have the following results.\\
(i) $S(t)[\phi](x)=e^{-\mu t} U(t)[\tilde{\phi}](x)$ for all $\phi \in X$, $t \in \mathbb{R}_{+}$ and $x \in \mathbb{R}_{+}$, where $\tilde{\phi}$ represents the odd extension of $\phi$.\\
(ii) $S(t)$ is an analytic and strongly continuous semigroup on $X$.\\
(iii) For all $t \in(0, \infty)$ and $(x, \phi) \in (0, \infty)\times X,$ there hold
$$
\begin{array}{l}
\|S(t)[\phi]\|\leq e^{-\mu t}\|\phi\|, \quad\left|\frac{\partial S(t)[\phi](x)}{\partial t}\right| \leq \frac{(1+\mu t) \exp (-\mu t)\|\phi\|}{t}, \\
\left|\frac{\partial S(t)[\phi](x)}{\partial x}\right| \leq \frac{\exp (-\mu t)\|\phi\|}{\sqrt{\pi t}}, \quad\left|\frac{\partial^{2} S(t)[\phi](x)}{\partial x^{2}}\right| \leq \frac{\exp (-\mu t)\|\phi\|}{t}.
\end{array}
$$
(iv) For any $t_{1}, t_{2} \in(0, \infty), x_{1}, x_{2} \in \mathbb{R}_{+}$ and $\phi \in X,$ there hold
$$
\begin{aligned}
\left|S\left(t_{1}\right)[\phi]\left(x_{1}\right)-S\left(t_{2}\right)[\phi]\left(x_{2}\right)\right| \leq & \frac{\left(1+\mu \min \left\{t_{1}, t_{2}\right\}\right) \exp \left(-\mu \min \left\{t_{1}, t_{2}\right\}\right)|| \phi||}{\min \left\{t_{1}, t_{2}\right\}}\left|t_{2}-t_{1}\right| \\
&+\frac{\exp \left(-\mu \min \left\{t_{1}, t_{2}\right\}\right)|| \phi||}{\sqrt{\pi \min \left\{t_{1}, t_{2}\right\}}}\left|x_{2}-x_{1}\right|.
\end{aligned}
$$
\end{lem}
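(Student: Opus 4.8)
I would organize the proof around part (i), which reduces every remaining assertion to standard facts about the full-line heat semigroup $U(t)$. To prove (i) I would use the classical method of images. Given $\phi\in X$ (bounded, uniformly continuous, with $\phi(0)=0$), form the odd extension $\tilde{\phi}$ of $\phi$ to $\mathbb{R}$; one first checks that $\tilde{\phi}\in Z=BUC(\mathbb{R},\mathbb{R})$ — boundedness is obvious, and uniform continuity of $\tilde{\phi}$ follows from that of $\phi$ on $\mathbb{R}_{+}$ together with $\phi(0)=0$, which is precisely what handles pairs of points straddling the origin. Substituting $\tilde{\phi}$ into \eqref{3.3}, splitting $\int_{-\infty}^{\infty}=\int_{0}^{\infty}+\int_{-\infty}^{0}$, and performing the change of variables $y\mapsto -y$ in the second integral (using evenness of the Gaussian and oddness of $\tilde{\phi}$) reproduces exactly the kernel $e^{-(x-y)^{2}/4t}-e^{-(x+y)^{2}/4t}$ of \eqref{3.2}, which gives (i). Part (ii) is then immediate: $\phi\mapsto\tilde{\phi}$ is a linear isometry of $X$ onto the closed subspace $Z_{\mathrm{odd}}\subset Z$ of odd functions, $U(t)$ leaves $Z_{\mathrm{odd}}$ invariant because the Gaussian kernel is even, and so by (i) the semigroup $S(t)$ is conjugate through this isometry to the restriction $U(t)|_{Z_{\mathrm{odd}}}$; analyticity and strong continuity of $U(t)$ on $Z$, already recorded in the text, therefore pass to $S(t)$.

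For the four estimates in (iii) I would differentiate the representation from (i) under the integral sign (legitimate for $t>0$, the Gaussian kernel and all its $x$- and $t$-derivatives being Gaussian-dominated) and reduce each bound to an explicit Gaussian moment. The sup bound uses $\|\tilde{\phi}\|_{Z}=\|\phi\|$ and $\frac{1}{\sqrt{4\pi t}}\int_{\mathbb{R}}e^{-(x-y)^{2}/4t}\,\mathrm{d}y=1$. For $\partial_{x}$, the identity $\partial_{x}e^{-(x-y)^{2}/4t}=-\frac{x-y}{2t}e^{-(x-y)^{2}/4t}$ and the rescaling $u=(y-x)/\sqrt{4t}$ reduce the estimate to $\int_{\mathbb{R}}|u|e^{-u^{2}}\,\mathrm{d}u=1$, producing the constant $1/\sqrt{\pi t}$; for $\partial_{x}^{2}$, the identity $\partial_{x}^{2}e^{-(x-y)^{2}/4t}=\frac{1}{2t}\big(\frac{(x-y)^{2}}{2t}-1\big)e^{-(x-y)^{2}/4t}$, the crude bound $|u^{2}-\tfrac12|\le u^{2}+\tfrac12$, and $\int_{\mathbb{R}}(u^{2}+\tfrac12)e^{-u^{2}}\,\mathrm{d}u=\sqrt{\pi}$ produce the constant $1/t$. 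For $\partial_{t}$ I would not differentiate directly but instead use that $u:=S(t)[\phi]$ is a classical solution of \eqref{3.1} for $t>0$, so $\partial_{t}u=\partial_{x}^{2}u-\mu u$ pointwise, whence $|\partial_{t}S(t)[\phi](x)|\le|\partial_{x}^{2}S(t)[\phi](x)|+\mu|S(t)[\phi](x)|\le\frac{(1+\mu t)e^{-\mu t}\|\phi\|}{t}$, combining the two bounds just obtained.

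Part (iv) is then routine. Set $t_{*}=\min\{t_{1},t_{2}\}$ and $t^{*}=\max\{t_{1},t_{2}\}$, and write $S(t_{1})[\phi](x_{1})-S(t_{2})[\phi](x_{2})$ as a spatial difference evaluated at the earlier time plus a temporal difference at a fixed spatial point, choosing — according to whether $t_{1}\le t_{2}$ or $t_{2}<t_{1}$ — the split so that the $x$-variation is taken at time $t_{*}$. The mean value theorem together with the $\partial_{x}$-bound of (iii) controls the spatial part by $\frac{e^{-\mu t_{*}}\|\phi\|}{\sqrt{\pi t_{*}}}|x_{1}-x_{2}|$, and the mean value theorem together with the $\partial_{t}$-bound controls the temporal part by $\big(\sup_{s\in[t_{*},t^{*}]}\tfrac{(1+\mu s)e^{-\mu s}}{s}\big)\|\phi\|\,|t_{1}-t_{2}|$. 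A one-line computation shows that $s\mapsto\frac{(1+\mu s)e^{-\mu s}}{s}$ and $s\mapsto\frac{e^{-\mu s}}{\sqrt{\pi s}}$ are both strictly decreasing on $(0,\infty)$, so the suprema are attained at $s=t_{*}$, which yields precisely the claimed inequality. The only points needing any care are the check that odd extension maps $X$ into $Z$ — the place where the Dirichlet condition $\phi(0)=0$ is genuinely used — and performing the two-term split in (iv) in the order dictated by which of $t_{1},t_{2}$ is the smaller; the rest is Gaussian bookkeeping. Since this lemma is quoted from \cite{YZD}, one may of course simply cite it rather than reconstruct the argument.
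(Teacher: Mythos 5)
Your proposal is correct, but it does genuinely more than the paper, which offers no proof at all: Lemma \ref{lem2.1} is simply quoted from \cite{YZD} (Lemma 2.1 there), exactly as you note in your last sentence. Your reconstruction is the standard one and all the computations check out: the method of images for (i) (with the observation that $\phi(0)=0$ is what makes the odd extension uniformly continuous across the origin), the conjugation of $S(t)$ with the restriction of $U(t)$ to the closed invariant subspace of odd functions for (ii), the Gaussian moment calculations $\int_{\mathbb{R}}|u|e^{-u^{2}}\,\mathrm{d}u=1$ and $\int_{\mathbb{R}}(2u^{2}+1)e^{-u^{2}}\,\mathrm{d}u=2\sqrt{\pi}$ which produce precisely the constants $1/\sqrt{\pi t}$ and $1/t$ in (iii), the trick of obtaining the $\partial_{t}$-bound from the equation $\partial_{t}u=\partial_{x}^{2}u-\mu u$ rather than by direct differentiation of the kernel, and the two-term split plus monotonicity of $s\mapsto(\mu+1/s)e^{-\mu s}$ for (iv). One caveat worth flagging: with $U(t)$ defined as in \eqref{3.3}, i.e.\ already carrying the factor $e^{-\mu t}$ as the semigroup of $\Delta-\mu I$, your images computation yields $S(t)[\phi]=U(t)[\tilde{\phi}]$, not $e^{-\mu t}U(t)[\tilde{\phi}]$; the identity as stated in (i) is consistent only if $U$ is read as the pure heat semigroup $e^{t\Delta}$ (which is how the paper actually uses it later, e.g.\ in \eqref{4.1}). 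This is an internal inconsistency of the paper's notation rather than a flaw in your argument, but you should state explicitly which convention you are using so that the extra $e^{-\mu t}$ does not appear twice.
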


For the purpose of later use, we prove the following property on the nonlocal diffusion operator of \eqref{4}.
\begin{lem}\label{lem3.2}
Define $K: X \rightarrow X$ by
$$
K(\phi)(\cdot)=\int_{\mathbb{R}_+} \Gamma(\alpha, \cdot, y) \phi(y) \mathrm{d} y
$$
for all  $\phi \in X$. Then,  $\|K\|\triangleq \sup\{\frac{\|K(\phi)\|}{\|\phi\|}: \|\phi\|\neq 0\}\leq 1$.
\end{lem}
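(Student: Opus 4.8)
The plan is to estimate the sup-norm of $K(\phi)$ pointwise and reduce everything to a single scalar integral of the kernel $\Gamma(\alpha,x,y)$ in the variable $y$ over $\mathbb{R}_+$. First I would note that for any fixed $x \geq 0$,
\[
|K(\phi)(x)| = \left|\int_{\mathbb{R}_+} \Gamma(\alpha,x,y)\phi(y)\,\mathrm{d}y\right| \leq \|\phi\|_X \int_{\mathbb{R}_+} |\Gamma(\alpha,x,y)|\,\mathrm{d}y,
\]
so it suffices to show $\int_{\mathbb{R}_+} |\Gamma(\alpha,x,y)|\,\mathrm{d}y \leq 1$ uniformly in $x \geq 0$. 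The key observation is that $\Gamma(\alpha,x,y)$ is nonnegative for $x,y \geq 0$: indeed, from \eqref{2} we have $\Gamma(\alpha,x,y) = \tfrac{1}{\sqrt{4\pi\alpha}}\big(e^{-(x-y)^2/4\alpha} - e^{-(x+y)^2/4\alpha}\big)$, and since $(x-y)^2 \leq (x+y)^2$ whenever $xy \geq 0$, the first exponential dominates the second, so $\Gamma \geq 0$. Hence the absolute values drop and we only need $\int_{\mathbb{R}_+}\Gamma(\alpha,x,y)\,\mathrm{d}y \leq 1$.

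Next I would compute this integral directly. Splitting and substituting $u = (y-x)/\sqrt{4\alpha}$ in the first term and $v = (y+x)/\sqrt{4\alpha}$ in the second gives
\[
\int_0^\infty \Gamma(\alpha,x,y)\,\mathrm{d}y = \frac{1}{\sqrt{\pi}}\int_{-x/\sqrt{4\alpha}}^{\infty} e^{-u^2}\,\mathrm{d}u - \frac{1}{\sqrt{\pi}}\int_{x/\sqrt{4\alpha}}^{\infty} e^{-v^2}\,\mathrm{d}v = \frac{1}{\sqrt{\pi}}\int_{-x/\sqrt{4\alpha}}^{x/\sqrt{4\alpha}} e^{-u^2}\,\mathrm{d}u,
\]
which equals $\operatorname{erf}\!\big(x/\sqrt{4\alpha}\big) \in [0,1)$ for all $x \geq 0$, and in particular is $\leq 1$. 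Combining with the pointwise bound yields $\|K(\phi)\|_X \leq \|\phi\|_X$ for every $\phi \in X$, hence $\|K\| \leq 1$. One should also remark briefly that $K(\phi) \in X = BUC(\mathbb{R}_+,\mathbb{R})$ — boundedness follows from the estimate just proved, and uniform continuity follows from the smoothness of $\Gamma$ in $x$ together with dominated convergence (or can be invoked from the analogous statement for the semigroup $S(t)$ in Lemma \ref{lem2.1}, since $K$ has the same Gaussian-difference structure as $S(t)$ up to the factor $e^{-\mu t}$ and the choice $t = \alpha$).

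I do not expect a genuine obstacle here; this is essentially a computation. The only mild subtlety is making the change of variables and the cancellation between the two tail integrals transparent, and confirming the sign of $\Gamma$ on the quadrant $x,y \geq 0$ — both are routine. If one prefers to avoid the explicit error function, an alternative is to observe that $\int_{\mathbb{R}_+}\Gamma(\alpha,x,y)\,\mathrm{d}y = U(\alpha)[\mathbf{1}_{\mathbb{R}_+} - \mathbf{1}_{\mathbb{R}_-}](x)\big|$ evaluated appropriately via the odd-extension identity in Lemma \ref{lem2.1}(i), which is bounded by the sup-norm of the sign function, namely $1$; either route closes the argument.
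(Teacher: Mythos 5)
Your proof is correct and follows essentially the same route as the paper's: a pointwise sup-norm estimate obtained by Gaussian changes of variables in the two kernel terms. The only real difference is cosmetic — the paper maps the two half-line integrals onto the disjoint ranges $(-\infty,x]$ and $[x,\infty)$ and bounds their difference by the full Gaussian mass, whereas you first use the positivity of $\Gamma$ on the quadrant to drop the absolute value and evaluate $\int_0^\infty\Gamma(\alpha,x,y)\,\mathrm{d}y=\operatorname{erf}\bigl(x/\sqrt{4\alpha}\bigr)<1$ exactly, a marginally sharper statement that is not needed for the lemma.
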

\begin{proof}
For any $x\in \mathbb{R}_+$, we have
\begin{equation}\label{3.3b}
\begin{aligned}
|K(\phi)(x)|=&|\int_{\mathbb{R}_+}\frac{1}{\sqrt{4\pi \alpha}}e^{-\frac{(x-y)^2}{4\alpha}}\phi(y) \mathrm{d} y-\int_{\mathbb{R}_+}\frac{1}{\sqrt{4\pi \alpha}}e^{-\frac{(x+y)^2}{4\alpha}}\phi(y) \mathrm{d}y|\\
=&|\int_{-\infty}^x\frac{1}{\sqrt{4\pi \alpha}}e^{-\frac{u^2}{4\alpha}}\phi(x-u) \mathrm{d} u-\int_x^\infty\frac{1}{\sqrt{4\pi \alpha}}e^{-\frac{u^2}{4\alpha}}\phi(u-x) \mathrm{d}u|\\
&\leq \|\phi\|\int_{-\infty}^\infty\frac{1}{\sqrt{4\pi \alpha}}e^{-\frac{u^2}{4\alpha}} \mathrm{d} u=\|\phi\|.
\end{aligned}
\end{equation}
Therefore, we have $\|K\|\triangleq \sup\limits_{\|\phi\|\neq 0}\ \frac{\|K(\phi)\|}{\|\phi\|}\leq 1$.
\end{proof}

In the sequel, we always impose the following assumptions on the nonlinear drift term $f$.\\

$\left(\mathbf{H}\right) f(\cdot):\mathbb{R} \rightarrow \mathbb{R}$ is continuously differentiable, $f(0)=0$, and $ \left|\frac{df(s)}{ds}\right| \leq L_{f}$.\\

Here,  $L_{f}$ is a positive constant, representing  the bound of the  derivation. Hence, it is clear that for all  $s_{1}, s_{2} \in \mathbb{R}$
 \begin{equation}\label{3.3a}
\begin{array}{c}
\left|f\left(s_{1}\right)-f\left( s_{2}\right)\right| \leq L_{f}\left|s_{1}-s_{2}\right|,
\end{array}
\end{equation}

In this paper, we consider the canonical probability space $(\Omega, \mathcal{F}, P)$  with
$$
\Omega=\left\{\omega=\left(\omega_{1}, \omega_{2}, \ldots, \omega_{m}\right) \in C\left(\mathbb{R} ; \mathbb{R}^{m}\right): \omega_i(0)=0\right\}
$$
and $\mathcal{F}$ is the Borel $\sigma$-algebra induced by the compact open topology of $\Omega,$ while $P$ is the corresponding Wiener measure on $(\Omega, \mathcal{F})$. Then, we identify $\omega$ with
$$
W(t,\omega)=\left(w_{1}(t), w_{2}(t), \ldots, w_{m}(t)\right) \quad \text { for } t \in \mathbb{R}.
$$
Moreover, we define the time shift by $$\theta_{t} \omega(\cdot)=\omega(\cdot+t)-\omega(t), t \in \mathbb{R}.$$
Then, $\left(\Omega, \mathcal{F}, P,\left\{\theta_{t}\right\}_{t \in \mathbb{R}}\right)$ is a metric dynamical system.

In order to construct the conjugate transformation, we consider the stochastic stationary solution of the one dimensional Ornstein-Uhlenbeck equation
\begin{equation}\label{3.5}
\mathrm{d} z_{j}+\mu z_{j} \mathrm{d} t=\mathrm{d} w_{j}(t), j=1, \ldots, m.
\end{equation}

The solution to \eqref{3.5} is given by
\begin{equation}\label{3.6}
z_{j}(t) \triangleq z_{j}\left(\theta_{t} \omega_{j}\right)=-\mu \int_{-\infty}^{0} e^{\mu s}\left(\theta_{t} \omega_{j}\right)(s) \mathrm{d} s, \quad t \in \mathbb{R},
\end{equation}

By Definition \ref{defn4}, one can see that the random variable $\left|z_{j}\left(\omega_{j}\right)\right|$ is tempered and $z_{j}\left(\theta_{t} \omega_{j}\right)$ is $P$-a.e. $\omega$ continuous. Therefore,  Proposition 4.3.3 in \cite{AL} implies that there exists a tempered function $0<r(\omega)<\infty$ such that
\begin{equation}\label{3.7}
\sum_{j=1}^{m}\left|z_{j}\left(\omega_{j}\right)\right|^{2} \leq r(\omega),
\end{equation}
where $r(\omega)$ satisfies, for $P$-a.e. $\omega \in \Omega$,
\begin{equation}\label{3.8}
r\left(\theta_{t} \omega\right) \leq e^{\frac{\mu}{2}|t|} r(\omega), \quad t \in \mathbb{R}.
\end{equation}
Combining \eqref{3.7} with \eqref{3.8}, we obtain that  for $P$-a.e. $\omega \in \Omega$,
\begin{equation}\label{3.9}
\sum_{j=1}^{m}\left|z_{j}\left(\theta_t\omega_{j}\right)\right|^{2} \leq e^{\frac{\mu}{2}|t|} r(\omega), \quad t \in \mathbb{R}
\end{equation}

Putting $z\left(\theta_{t} \omega\right)(x)=\sum_{j=1}^{m} g_{j}(x) z_{j}\left(\theta_{t} \omega_{j}\right),$ by \eqref{3.5}, we have
$$
\mathrm{d} z+\mu z \mathrm{d} t=\sum_{j=1}^{m} g_{j}(x) \mathrm{d}w_{j}.
$$
To prove that \eqref{4} possesses a global solution that generates a RDS, we consider the transformation $v(t)=u(t)-z\left(\theta_{t} \omega\right)$, where $u$ is a solution of  \eqref{4}, and show that $v$ is a global solution of the transformed equation and  generates a random dynamical system. Then, we show that \eqref{4} also has a global solution and generates a conjugated RDS thanks to the inverse transformation. This method has also been adopted by \cite{HZ20}, \cite{LG} and \cite{25}  to deal with random attractors as well as  \cite{DLS03,DLS04,LS07} and \cite{LS08} to tackle invariant manifolds of stochastic partial differential equations with or without delay. Apparently, $v$ satisfies
\begin{equation}\label{3.10}
\displaystyle \frac{\partial v(t,x)}{\partial t}=\displaystyle   \Delta v(t,x)-\mu v(t,x)+F(\left(v_t+z\left(\theta_{t+\cdot} \omega\right)\right))(x)+\Delta z\left(\theta_{t} \omega\right)(x), t >0,x\in (0,\infty)
\end{equation}
with boundary condition
\begin{equation}\label{3.11}
v(t, 0)=0, \quad \text { for } \quad t \in(0, \infty),
\end{equation}
and initial condition
\begin{equation}\label{3.12}
v(\xi, x, \omega)=\psi(\xi, x, \omega) \triangleq \phi(x, \xi)-z\left(\theta_{\xi} \omega\right)(x) \quad \text { for } \quad(x, \xi) \in \mathbb{R}_{+} \times[-\tau, 0].
\end{equation}
Here, $F: \mathcal{C} \rightarrow X $ is defined by
$$
F(\varphi_t+z\left(\theta_{t+\cdot} \omega\right))(x)=\varepsilon\int_{\mathbb{R}_{+}}\Gamma(\alpha,x,y) f(\varphi(t-\tau, y)+z\left(\theta_{t-\tau}\omega\right))(y)\mathrm{dy}=\varepsilon K[f(\varphi(t-\tau, \cdot)+z\left(\theta_{t-\tau}\omega, \cdot\right))] (x),
$$
for any $\varphi\in \mathcal{C}$.

We now show that the pathwise deterministic problem \eqref{3.10}-\eqref{3.12} has a global mild solution under assumption  $\left(\mathbf{H}\right)$. We aim at solving  the following integral equation
 \begin{equation}\label{3.13}
v(t,\omega,\psi)=\left\{\begin{array}{l}
S(t) \psi(0)+\int_{0}^{t} S(t-r) F\left(v_{r}+z\left(\theta_{r+\cdot} \omega\right)\right) d r+\int_{0}^{t} S(t-r) \Delta z\left(\theta_{r} \omega\right) d r,\\
\psi(t), t \in[-\tau, 0],
\end{array}\right.
 \end{equation}
for the initial data $\psi \in \mathcal{C}$. We have the following results.
\begin{thm}\label{thm1}
Assume that $f$ satisfies $\left(\mathbf{H}\right)$. Then, for any $\psi \in \mathcal{C}$ and for $P$-a.e. $\omega \in \Omega$, there exists a global mild solution to \eqref{3.10}-\eqref{3.12}. Moreover, if $f: \mathcal{C}\rightarrow X$ is globally bounded, i.e. there exists $M>0$ such that $\|f(\varphi)\|\leq M$ for all $\varphi \in \mathcal{C}$, then the solution is pullback bounded, i.e., there exists $C(\omega)>0$ such that $\|v(t, \theta_{-t}\omega, \psi)\|\leq C(\omega)$  for $P$-a.e. $\omega \in \Omega$.
\end{thm}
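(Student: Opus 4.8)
The plan is to solve the pathwise integral equation \eqref{3.13} by a contraction mapping argument, for each fixed $\omega$ in the full‑measure set on which $t\mapsto z_j(\theta_t\omega_j)$ is continuous and \eqref{3.9} holds; global existence will then follow from the fact that the contraction constant can be made uniform in the time horizon, and the pullback bound in the globally bounded case will be read directly off \eqref{3.13} once the growth of the Ornstein--Uhlenbeck term is controlled. Concretely, fix $T>0$ and work on the complete metric space $\mathcal{X}_T=\{v\in C([-\tau,T],X):v|_{[-\tau,0]}=\psi\}$, endowed with the weighted norm $\|v\|_\lambda=\sup_{t\in[-\tau,T]}e^{-\lambda t}\|v(t)\|$ for a parameter $\lambda>0$ to be chosen. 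Define $\mathcal{T}v$ by $(\mathcal{T}v)(t)=\psi(t)$ on $[-\tau,0]$ and by the right‑hand side of the first line of \eqref{3.13} on $[0,T]$. First I would check that $\mathcal{T}$ maps $\mathcal{X}_T$ into itself: continuity of $t\mapsto S(t)\psi(0)$ follows from strong continuity of $S$ on $X$ (Lemma \ref{lem2.1}(ii)) together with the quantitative estimate Lemma \ref{lem2.1}(iv) near $t=0$; continuity of the two convolution integrals follows because $r\mapsto F(v_r+z(\theta_{r+\cdot}\omega))$ and $r\mapsto\Delta z(\theta_r\omega)=\sum_{j=1}^{m}g_j'' z_j(\theta_r\omega_j)$ are continuous $X$‑valued maps (the former since $v$, $z(\theta_\cdot\omega)$ and $f$ are continuous and $K$ is bounded, Lemma \ref{lem3.2}; the latter by the standing regularity of the $g_j$), combined with the semigroup bounds of Lemma \ref{lem2.1}(iii). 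For the contraction, since $F(\chi)=\varepsilon K[f(\chi(-\tau))]$, using $\|S(t)\|\le e^{-\mu t}$, $\|K\|\le 1$ and the Lipschitz bound \eqref{3.3a},
\[
\|(\mathcal{T}v)(t)-(\mathcal{T}\tilde v)(t)\|\le \varepsilon L_f\int_0^t e^{-\mu(t-r)}\,\|v_r-\tilde v_r\|_{\mathcal{C}}\,\mathrm{d}r .
\]
Since $\|v_r-\tilde v_r\|_{\mathcal{C}}\le e^{\lambda r}\|v-\tilde v\|_\lambda$, multiplying by $e^{-\lambda t}$ and integrating gives $\|\mathcal{T}v-\mathcal{T}\tilde v\|_\lambda\le\frac{\varepsilon L_f}{\mu+\lambda}\|v-\tilde v\|_\lambda$, a strict contraction as soon as $\lambda>\varepsilon L_f-\mu$, so Banach's fixed point theorem yields a unique $v\in\mathcal{X}_T$ solving \eqref{3.13}.

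For global existence, I would observe that $\lambda$, and hence the contraction constant, can be fixed independently of $T$; therefore the solutions produced on $[-\tau,T]$ for different $T$ are restrictions of one another by uniqueness, and gluing them yields a global mild solution $v(\cdot,\omega,\psi)$ on $[-\tau,\infty)$. (Alternatively one may run the method of steps: on each $[n\tau,(n+1)\tau]$ the term $F(v_r+z(\theta_{r+\cdot}\omega))$ involves $v$ only through its values on $[(n-1)\tau,n\tau]$, so the equation there is affine in $v$ and given explicitly by the variation‑of‑constants formula, which excludes finite‑time blow‑up.)

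For the pullback bound, assume now $|f|\le M$ on $\mathbb{R}$, so that $\|F(\chi)\|=\varepsilon\|K[f(\chi(-\tau))]\|\le\varepsilon M$ uniformly in $\chi\in\mathcal{C}$, by Lemma \ref{lem3.2}. Writing \eqref{3.13} with $\omega$ replaced by $\theta_{-t}\omega$, so that $z(\theta_{r+\cdot}\theta_{-t}\omega)=z(\theta_{r-t+\cdot}\omega)$ and $\Delta z(\theta_r\theta_{-t}\omega)=\Delta z(\theta_{r-t}\omega)$, and using $\|S(t)\|\le e^{-\mu t}$,
\[
\|v(t,\theta_{-t}\omega,\psi)\|\le e^{-\mu t}\|\psi\|_{\mathcal{C}}+\varepsilon M\int_0^t e^{-\mu(t-r)}\,\mathrm{d}r+\int_0^t e^{-\mu(t-r)}\,\|\Delta z(\theta_{r-t}\omega)\|\,\mathrm{d}r .
\]
The first two terms are at most $\|\psi\|_{\mathcal{C}}+\varepsilon M/\mu$. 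For the third, \eqref{3.9} gives $|z_j(\theta_{r-t}\omega_j)|\le e^{\frac{\mu}{4}(t-r)}\sqrt{r(\omega)}$ for $0\le r\le t$, whence $\|\Delta z(\theta_{r-t}\omega)\|\le\kappa(\omega)e^{\frac{\mu}{4}(t-r)}$ with $\kappa(\omega)=\sqrt{m}\,(\max_{1\le j\le m}\|g_j''\|)\sqrt{r(\omega)}$, so that $\int_0^t e^{-\mu(t-r)}\|\Delta z(\theta_{r-t}\omega)\|\,\mathrm{d}r\le\kappa(\omega)\int_0^t e^{-\frac{3\mu}{4}(t-r)}\,\mathrm{d}r\le\frac{4\kappa(\omega)}{3\mu}$. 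Collecting terms, $\|v(t,\theta_{-t}\omega,\psi)\|\le\|\psi\|_{\mathcal{C}}+\frac{\varepsilon M}{\mu}+\frac{4\kappa(\omega)}{3\mu}=:C(\omega)$ for every $t\ge 0$, which is the asserted pullback bound.

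The delicate point — and the reason the Hilbert‑space energy method of \cite{17,16,18,LG,25} is not available here — is that every estimate must be routed through the pointwise semigroup bounds of Lemma \ref{lem2.1} and the operator bound $\|K\|\le 1$ of Lemma \ref{lem3.2} rather than through an inner‑product identity. In particular, the inhomogeneous term $\int_0^t S(t-r)\Delta z(\theta_r\omega)\,\mathrm{d}r$ needs care on two counts: first, one must check, under the standing hypotheses on the $g_j$, that $\Delta z(\theta_r\omega)=\sum_{j=1}^{m}g_j'' z_j(\theta_r\omega_j)$ is a genuine continuous $X$‑valued map so that the convolution is meaningful; and second, for the pullback estimate one must balance the at‑most‑$e^{\mu|t|/4}$ growth of the stationary Ornstein--Uhlenbeck process furnished by \eqref{3.9} against the $e^{-\mu t}$ decay furnished by $S(t)$ — the leftover rate $\tfrac{3\mu}{4}>0$ is exactly what makes the integral bounded uniformly in $t$.
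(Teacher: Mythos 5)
Your argument is correct and its skeleton coincides with the paper's: both set up the fixed-point problem for \eqref{3.13} on $X_\psi^T=\{v\in C([-\tau,T],X):v|_{[-\tau,0]}=\psi\}$, both verify well-definedness through the pointwise semigroup bounds of Lemma \ref{lem2.1} and the operator bound $\|K\|\le 1$ of Lemma \ref{lem3.2} (the paper carries out the continuity of the convolution terms by an explicit $\delta$-splitting, which you compress into a citation of standard facts — acceptable, since this is routine for strongly continuous semigroups with continuous integrands), and both obtain the pullback bound by the same direct estimate balancing $e^{-\mu t}$ against the tempered growth $e^{\mu|t|/4}$ of the Ornstein--Uhlenbeck term from \eqref{3.9}. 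Where you genuinely diverge is the globalization step. The paper obtains only a local contraction (contraction constant $\tfrac{\varepsilon L_f}{\mu}(1-e^{-\mu T})$, forcing a small $T$ when $\varepsilon L_f>\mu$) and must then run a method of steps together with a separate argument that the step times $T_n$ accumulate at infinity, via a uniform positive lower bound $t^*$ on the local existence time. You instead use a Bielecki weighted norm $\sup_t e^{-\lambda t}\|v(t)\|$, which yields the contraction factor $\tfrac{\varepsilon L_f}{\mu+\lambda}<1$ for $\lambda>\varepsilon L_f-\mu$ \emph{uniformly in $T$}, so the solution exists on every $[-\tau,T]$ in one stroke and globality follows from uniqueness and restriction; your parenthetical observation that the pure delay structure makes \eqref{3.13} affine (indeed explicit) on each interval of length $\tau$ is an even more elementary route that bypasses the fixed-point theorem entirely. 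Both of your alternatives are cleaner than the paper's $T_n\to\infty$ discussion and buy a simpler, gap-free global existence argument; the paper's version has the minor advantage of exhibiting an explicit local existence time. Your pullback estimate is essentially identical to \eqref{3.35}, with the cosmetic difference that you track $\sqrt{r(\omega)}$ and the exponent $3\mu/4$ where the paper absorbs constants into $c$ and uses $\mu/2$. No gaps.
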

\begin{proof}
We first prove that \eqref{3.10}-\eqref{3.12} has a local mild solution and then show it can be extended to a global one by an argument of steps.  For any $\psi \in \mathcal{C}$ and $P$-a.e. $\omega \in \Omega$, we show in the following that there exist  $T(\omega)>0$ and $v \in C([-\tau, T(\omega)]; X)$ satisfying \eqref{3.13} on $[-\tau, T(\omega)]$ thanks to the Banach fixed point theorem.{For a fixed $\omega$, we consider the complete metric subset $X^T_{\psi}$ of $C([-\tau, T], X)$ defined by
$$
X_{\psi}^T=\{v \in C([-\tau, T] ; X): u(s)=\psi(s), s \in[-\tau, 0]\}.
$$}
For such a $T>0$ to be determined later, and $t\in [-\tau,T]$, we define the following operator $\Lambda: X_{\psi} \rightarrow X_{\psi}$ (where we omit $T$ since no confusion is possible)
 \begin{equation}\label{3.14}
\Lambda(\zeta)(t)=\left\{\begin{array}{l}
S(t) \psi(0)+\int_{0}^{t}S(t-r) F\left(\zeta_{r}+z\left(\theta_{r+\cdot} \omega\right)\right) d r+\int_{0}^{t} S(t-r) \Delta z\left(\theta_{r} \omega\right) dr, t \in (0, T]\\
\psi(t), t \in[-\tau, 0].
\end{array}\right.
 \end{equation}
We show in the sequel that $\Lambda$ is well defined, maps  $X_{\psi}$ into itself and is a contraction on $C([-\tau, T]; X)$, leading to the existence of  a unique fixed point in $X_{\psi}$ with $T$ being determined according to the Banach fixed point theorem. It follows from Lemma \ref{lem2.1} (ii) and (iii), $F:\mathcal{C}\rightarrow X$ and $g_j $ is twice continuously differentialble that we have $\Lambda(\zeta)(t)\in X$ for any fixed $t\in [-\tau, T]$. Now we prove the continuity. If $t_1, t_2\in[-\tau,0]$, the result is obvious. Let us then pick $t_1, t_2\in (0, T]$, and assume without loss of  generality, that $t_1<t_2$. Therefore, we have
\begin{equation}\label{3.15}
\begin{aligned}
\|\Lambda(\zeta)(t_1)-\Lambda(\zeta)(t_2)\|&=\| [S(t_1)-S(t_2)]\psi(0)\|+\|\int_{0}^{t_1}S(t_1-r) F\left(\zeta_{r}+z\left(\theta_{r+\cdot} \omega\right)\right) dr\\
&-\int_{0}^{t_2}S(t_2-r) F\left(\zeta_{r}+z\left(\theta_{r+\cdot} \omega\right)\right) dr\|+\|\int_{0}^{t_1} S(t_1-r) \Delta z\left(\theta_{r} \omega\right)dr\\
&-\int_{0}^{t_2} S(t_2-r) \Delta z\left(\theta_{r} \omega\right)\|dr
\triangleq I_1+I_2+I_3.
\end{aligned}
\end{equation}
We estimate each term on the right hand side of \eqref{3.15} thanks to Lemmas \ref{lem2.1} and 3.2.
\begin{equation}\label{3.16}
\begin{aligned}
I_1=\|S(t_1)-S(t_2)]\psi(0)\|&\leq \frac{\left(1+t_{1}\right) \exp \left(-\mu t_{1}\right)||\psi(0)||}{t_{1}}\left|t_{2}-t_{1}\right|.
\end{aligned}
\end{equation}

\begin{equation}\label{3.17}
\begin{aligned}
I_2&=\|\int_{0}^{t_1}[S(t_1-r)-S(t_2-r)] F\left(\zeta_{r}+z\left(\theta_{r+\cdot} \omega\right)\right) dr-\int_{t_1}^{t_2}S(t_2-r) F\left(\zeta_{r}+z\left(\theta_{r+\cdot} \omega\right)\right) dr]\|\\
& \leq \int_{0}^{t_1-\sqrt{\delta}}[S(t_1-r)-S(t_2-r)] F\left(\zeta_{r}+z\left(\theta_{r+\cdot} \omega\right)\right) dr\\
&+\int_{t_1-\sqrt{\delta}}^{t_1}[S(t_1-r)-S(t_2-r)] F\left(\zeta_{r}+z\left(\theta_{r+\cdot} \omega\right)\right) dr+\|\int_{t_1}^{t_2}S(t_2-r) F\left(\zeta_{r}+z\left(\theta_{r+\cdot} \omega\right)\right) dr]\|\\
& \leq \varepsilon \left|t_{2}-t_{1}\right|\int_{0}^{t_1-\sqrt{\delta}}\frac{\left(1+\mu (t_{1}-r)\right) \exp \left(-\mu (t_{1}-r)\right)M}{t_{1}-r}dr+2\varepsilon M\sqrt{\delta}+\varepsilon M|t_2-t_1|\\
& \leq \varepsilon M\left|t_{2}-t_{1}\right|(\frac{1}{\sqrt{\delta}}+\mu)+2\varepsilon M\sqrt{\delta}+\varepsilon M|t_2-t_1|.
\end{aligned}
\end{equation}

\begin{equation}\label{3.18}
\begin{aligned}
I_3&=\|\int_{0}^{t_1}[S(t_1-r)-S(t_2-r)] \Delta z\left(\theta_{r} \omega\right)dr-\int_{t_1}^{t_2}S(t_2-r) \Delta z\left(\theta_{r} \omega\right) dr]\|\\
& \leq \int_{0}^{t_1-\sqrt{\delta}}[S(t_1-r)-S(t_2-r)] ||\Delta z\left(\theta_{r} \omega\right)||dr+\int_{t_1-\sqrt{\delta}}^{t_1}[S(t_1-r)-S(t_2-r)] ||\Delta z\left(\theta_{r} \omega\right)|| dr\\
&+\|\int_{t_1}^{t_2}S(t_2-r)||\Delta z\left(\theta_{r} \omega\right)|| dr\|\\
& \leq \left|t_{2}-t_{1}\right|\int_{0}^{t_1-\sqrt{\delta}}\frac{\left(1+\mu (t_{1}-r)\right) \exp \left(-\mu (t_{1}-r)\right)||\Delta z\left(\theta_{r} \omega\right)||}{t_{1}-r}dr+\delta||\Delta z\left(\theta_{r} \omega\right)||+||\Delta z\left(\theta_{r} \omega\right)|| |t_2-t_1|\\
& \leq ||\Delta z\left(\theta_{r} \omega\right)||[\left|t_{2}-t_{1}\right|(\frac{1}{\sqrt{\delta}}+1+\mu)+\delta].
\end{aligned}
\end{equation}
In equations \eqref{3.17} and \eqref{3.18}, $\delta$ satisfies  $\delta\in(0,1)$ and $t_1<t_2<t_1+\delta$ with $\delta \rightarrow 0$, then $I_1\leq \varepsilon M (\sqrt{\delta}+\mu\delta)+2\varepsilon M\sqrt{\delta}+\varepsilon M\delta$, $I_3\leq ||\Delta z\left(\theta_{r} \omega\right)||[\sqrt{\delta}+(2+\mu)\delta]$. Hence, when $\delta \rightarrow 0$, it holds that $t_2\rightarrow t_1$, $I_1+I_2+I_3\rightarrow 0$, implying the continuity of $\Lambda(\zeta)(t)\in H$ with respect to  $t\in [-\tau, T]$. Thus, we have obtained that  $\Lambda$ is well defined in $X_{\psi}$.

In the sequel, we show the contraction property of $\Lambda$ on $X_{\psi}$. Let $\zeta^{1}, \zeta^{2} \in X_{\psi}$, then for $t\in [-\tau, 0]$, it holds $\zeta^{1}(t)=\zeta^2(t)$.  Owing to $(\mathbf{H})$, for $t \in[0, T)$ we have
\begin{equation}\label{3.19}
\begin{aligned}
\left\|\left(\Lambda\left(\zeta^1\right)(t)-\Lambda\left(\zeta^2\right)\right)(t)\right\|&= \|\int_{0}^{t}S(t-r) [F\left(\zeta^1_{r}+z\left(\theta_{r+\cdot} \omega\right)\right)-F\left(\zeta^2_{r}+z\left(\theta_{r+\cdot} \omega\right)\right)] d r\|. \\
& \leq \varepsilon L_{f}\int_{0}^{t}e^{-\mu (t-r)}\left\|\zeta_r^1-\zeta_r^2\right\|_{\mathcal{C}}dr\\
&\leq \frac{\varepsilon L_{f}}{\mu}(1-e^{-\mu t})\left\|\zeta_r^1-\zeta_r^2\right\|_{\mathcal{C}}.
\end{aligned}
\end{equation}
Hence, if $\frac{\varepsilon L_{f}}{\mu}\leq1$, then for any $t\geq 0$,   $\Lambda$ is a contraction on $X_{\psi}$. However, in the scenario $\frac{\varepsilon L_{f}}{\mu}>1$, we can choose $T=\frac{1}{2\mu}\ln(\frac{\varepsilon L_{f}}{\varepsilon L_{f}-\mu})$, and therefore $\Lambda$ is a contraction on $X_{\psi}$, which indicates the existence of a unique local mild solution to \eqref{3.13}.

In the following, we  will derive the existence of a global mild solution by an argument of steps. Denote $T_{1}(\omega)=\frac{1}{2\mu}\ln(\frac{\varepsilon L_{f}}{\varepsilon L_{f}-\mu})$ and let us build the solution in the next time interval, say $\left[T_{1}(\omega), T_{2}(\omega)\right]$. It suffices  to find $T_{2}(\omega)$ such that \eqref{3.13} also admits a local mild solution in the last interval. We only need to solve
 \begin{equation}\label{3.20}
v(t,\omega,\psi)=\left\{\begin{array}{l}
S(t-T_{1}(\omega)) \psi(0)+\int_{0}^{t-T_{1}(\omega)} S(t-T_{1}(\omega)-r) F\left(v_{r}+z\left(\theta_{r+\cdot} \omega\right)\right) d r\\
\quad \ \ \ \ +\int_{0}^{t} S(t-T_{1}(\omega)-r) \Delta z\left(\theta_{r} \omega\right) d r,\\
v_1(t), t-T_{1}(\omega) \in[-\tau, 0],
\end{array}\right.
 \end{equation}
where $v_{1}$ denotes the solution obtained on $\left[-\tau, T_{1}(\omega)\right]$. Taking $s:=t-T_{1}$, the above system is equivalent to solve the problem for $y(s)=v\left(s+T_{1}(\omega)\right)$
 \begin{equation}\label{3.21}
y(s,\omega,\psi)=\left\{\begin{array}{l}
S(s) \hat{\psi}(0)+\int_{0}^{s} S(s-r) F\left(v_{r}+z\left(\theta_{r+\cdot} \omega\right)\right) d r+\int_{0}^{s} S(s-r) \Delta z\left(\theta_{r} \omega\right) d r,\\
\hat{\psi}(s)\triangleq u_1(s+T_{1}(\omega)), s\in[-\tau, 0],
\end{array}\right.
 \end{equation}
which is the same as  the previous step, but with initial condition $\hat{\psi}$. Taking the same steps as before, we can obtain a new piece given by a local solution defined now in the interval $\left[T_1-\tau, T_2 \right]$. {Thus, by repeating the same procedure, one can obtain a sequence of time $T_n$. We prove in the sequel that $T_n\rightarrow \infty$. We only need to show that  for any given $t>0$, there exist $i\in \mathbb{N}$ such that $T_i>t$. If $T(\omega):=T_{1}(\omega) \geq t$ there is nothing to show. If this is not case, let  $s^{*}$ be  the unique  solution of the equation
$$
 \frac{\varepsilon L_{f}}{\mu}(1-e^{-\mu t})=1 / 2,
$$
which is trivially a positive lower bound of $T(\omega)$.  If $T_{2}(\omega) \geq t$ we are done. Otherwise, $t>T_{2}(\omega)=T(\omega)+T\left(\theta_{T(\omega)} \omega\right)$, i.e. $T\left(\theta_{T(\omega)} \omega\right)<$ $t-T(\omega)$, and therefore the previous inequality implies that $t^{*} \leq T\left(\theta_{T(\omega)} \omega\right)$, and in particular that $T_{2}(\omega) \geq 2 t^{*}$. Repeating this method it turns out that there exists $i \in \mathbb{N}$ such that $T_{i}(\omega) \geq i t^{*}>t$.}

In what follows, we prove the pullback boundedness of the solution provided $f$ is bounded.  Since $g_j$ is twice continuously differentiable, by \eqref{3.8} and \eqref{3.9}, there must exist a constant $c>0$ such that $\|\Delta z(\theta_{-t}\omega)\|\leq ce^{\frac{\mu t}{2}}r(\omega)$. It follows from \eqref{3.13} and the boundedness of $f$ that, for $P$-a.e. $\omega \in \Omega$,
\begin{equation}\label{3.35}
\begin{aligned}
\left\|v(t,\theta_{-t}\omega,\psi)\right\|&= |S(t)|\| \psi(0)\|+M\int_{0}^{t}e^{-\mu (t-r)}d r+c\int_{0}^{t}e^{-\mu (t-r)}e^{\frac{\mu(t-r)}{2}}r(\omega)d r\\
& \leq e^{-\mu t}\| \psi(0)\|+M\frac{1}{\mu}(1-e^{-\mu t})+cr(\omega)\frac{2}{\mu}(1-e^{-\mu t/2})\\
&\leq \| \psi(0)\|+(M+cr(\omega))\frac{2}{\mu}.
\end{aligned}
\end{equation}
Therefore, the pullback boundedness of $v$ is clear by taking $C(\omega)=\| \psi(0)\|+(M+cr(\omega))\frac{2}{\mu}$.
\end{proof}

\begin{rem}\label{rem2}
By Corollary 2.2.5 in \cite{W} and the analyticity of the semigroup $S(t)$ given in Lemma \ref{lem2.1} (ii), we know that a
mild solution of problem \eqref{3.10}-\eqref{3.12} is also a classical solution of problem \eqref{3.10}-\eqref{3.12} for all $t>\tau$. Hence, $u(t, \omega, \phi)=v(t, \omega, \psi)+z\left(\theta_{t} \omega\right)$ is a global solution  to \eqref{4}.
\end{rem}

In the sequel, we show that the  solution of \eqref{3.13} generates a RDS. To this end we will prove that the cocycle property in Definition \ref{defn2} holds.

\begin{thm}\label{thm3.2}
The global mild solution $v$ of \eqref{3.10}-\eqref{3.12} generates a random dynamical system $\Phi: \mathbb{R}^{+} \times \Omega \times \mathcal{C} \rightarrow \mathcal{C}$
defined by $\Phi(t, \omega, \psi)(\cdot)=v_{t}(\cdot),$ i.e.,\\
 \begin{equation}\label{3.22}
\Phi(t, \omega, \psi)(\cdot)=\left\{\begin{array}{l}
S(t+\cdot) \psi(0)+\int_{0}^{t+\cdot} S(t+\cdot-r) F\left(v_{r}+z\left(\theta_{r-\tau+\cdot} \omega\right)\right) d r+\int_{0}^{t+\cdot} S(t-r+\cdot) \Delta z\left(\theta_{r} \omega\right) d r,\\
\psi(t+\cdot), t+\cdot \in[-\tau, 0].
\end{array}\right.
 \end{equation}
\end{thm}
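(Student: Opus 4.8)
The plan is to verify, for the map $\Phi(t,\omega,\psi)=v_t$, the three defining properties of a random dynamical system in Definition \ref{defn2}: joint measurability, the identity at time zero, and the cocycle law; that $\Phi$ takes values in $\mathcal{C}$ is already guaranteed by Theorem \ref{thm1}, since $v\in C([-\tau,\infty);X)$ forces $v_t\in C([-\tau,0];X)=\mathcal{C}$ for every $t\ge0$. Property (ii) is immediate from the initial condition in \eqref{3.13}: for $\xi\in[-\tau,0]$ we have $\Phi(0,\omega,\psi)(\xi)=v_0(\xi)=v(\xi,\omega,\psi)=\psi(\xi)$, so $\Phi(0,\omega,\cdot)$ is the identity on $\mathcal{C}$.

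The core of the argument is the cocycle property (iii). Fix $t,s\ge0$, $\omega\in\Omega$ and $\psi\in\mathcal{C}$, and write $u(\cdot)=v(\cdot,\omega,\psi)$. Splitting each Duhamel integral in \eqref{3.13} at the time $s$ and using the semigroup law $S(r+s-\sigma)=S(r)S(s-\sigma)$ for $\sigma\in[0,s]$ together with the fact that $u(s)$ itself satisfies \eqref{3.13}, one obtains, for every $r\ge0$,
\begin{equation*}
u(r+s)=S(r)u(s)+\int_{s}^{r+s}S(r+s-\sigma)F\big(u_{\sigma}+z(\theta_{\sigma+\cdot}\omega)\big)\,d\sigma+\int_{s}^{r+s}S(r+s-\sigma)\Delta z(\theta_{\sigma}\omega)\,d\sigma .
\end{equation*}
Put $p(r):=u(r+s)$ for $r\ge-\tau$, so that $p_0=u_s=\Phi(s,\omega,\psi)$ and $p(r)=u_s(r)$ for $r\in[-\tau,0]$. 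The change of variable $\sigma=\rho+s$, combined with the flow identity $\theta_{\rho+s}=\theta_{\rho}\theta_{s}$ (which yields $z(\theta_{\rho+s}\omega)=z(\theta_{\rho}(\theta_{s}\omega))$, $\Delta z(\theta_{\rho+s}\omega)=\Delta z(\theta_{\rho}(\theta_{s}\omega))$ and $u_{\rho+s}=p_{\rho}$), turns the previous display into
\begin{equation*}
p(r)=S(r)p_0(0)+\int_{0}^{r}S(r-\rho)F\big(p_{\rho}+z(\theta_{\rho+\cdot}(\theta_{s}\omega))\big)\,d\rho+\int_{0}^{r}S(r-\rho)\Delta z(\theta_{\rho}(\theta_{s}\omega))\,d\rho,\qquad r\ge0,
\end{equation*}
with $p(r)=p_0(r)$ on $[-\tau,0]$. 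Hence $p$ is a mild solution of \eqref{3.10}-\eqref{3.12} driven by $\theta_s\omega$ with initial datum $u_s\in\mathcal{C}$. The contraction argument in the proof of Theorem \ref{thm1} (contraction of $\Lambda$ on each step, equivalently the Gronwall estimate \eqref{3.19} together with Lemma \ref{lem2}) also delivers uniqueness of such solutions, so $p(\cdot)=v(\cdot,\theta_s\omega,u_s)$ on $[-\tau,\infty)$. Evaluating at time $t$ and using $v_{t+s}(\xi)=u(t+s+\xi)=p(t+\xi)=p_t(\xi)$ for $\xi\in[-\tau,0]$ gives
\begin{equation*}
\Phi(t+s,\omega,\psi)=v_{t+s}(\cdot,\omega,\psi)=p_t(\cdot)=v_t(\cdot,\theta_s\omega,u_s)=\Phi\big(t,\theta_s\omega,\Phi(s,\omega,\psi)\big),
\end{equation*}
which is exactly (iii).

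For the measurability requirement (i), I would proceed through the construction in Theorem \ref{thm1}. From \eqref{3.6}, $z_j(\theta_t\omega)=-\mu\int_{-\infty}^{0}e^{\mu\sigma}\big(\omega(t+\sigma)-\omega(t)\big)\,d\sigma$ is jointly continuous in $(t,\omega)$, hence so are the $X$-valued maps $(t,\omega)\mapsto z(\theta_t\omega)$ and $(t,\omega)\mapsto\Delta z(\theta_t\omega)$; since $S(\cdot)$ is strongly continuous and $F$ is continuous, each Picard iterate $\Lambda^{n}(\psi)$ in \eqref{3.14} is $\big(\mathcal{B}(\mathbb{R}^{+})\times\mathcal{F}\times\mathcal{B}(\mathcal{C}),\mathcal{B}(X)\big)$-measurable in $(t,\omega,\psi)$, and these iterates converge to $v$ uniformly on compact time intervals (first on $[-\tau,T_1(\omega)]$ and then step by step), so $v$, and therefore $\Phi(t,\omega,\psi)=v_t$, is jointly measurable into $\mathcal{C}$. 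Alternatively, $\psi\mapsto\Phi(t,\omega,\psi)$ is Lipschitz on $\mathcal{C}$ by \eqref{3.19} and Lemma \ref{lem2}, $t\mapsto\Phi(t,\omega,\psi)$ is continuous, and $\omega\mapsto\Phi(t,\omega,\psi)$ is $\mathcal{F}$-measurable, whence joint measurability follows from the standard fact that a Carath\'{e}odory map between separable metric spaces is jointly measurable.

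I expect the cocycle identity to be the main obstacle, not because of any hard estimate but because one must simultaneously track how the shift $r\mapsto r+s$ interacts with the delay buried in $F(\varphi_t+z(\theta_{t+\cdot}\omega))$, with the nonautonomous forcing $\Delta z(\theta_t\omega)$, and with the lower limits of the Duhamel integrals, and then close the loop through a uniqueness statement that is only implicit in Theorem \ref{thm1}. The measurability step is conceptually routine but somewhat delicate here, because the phase space is a Banach space and $X=BUC(\mathbb{R}_+,\mathbb{R})$ is not separable in the supremum norm; the Carath\'{e}odory argument, or preferably the explicit iteration argument above, should be phrased with this in mind.
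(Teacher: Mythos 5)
Your proof is correct, and the core computation for the cocycle law (split the Duhamel integral at the intermediate time, use the semigroup property of $S$, change variables, and invoke $\theta_{\rho+s}=\theta_{\rho}\theta_{s}$) is exactly the one the paper performs. The packaging, however, is genuinely different. The paper verifies the identity $\Phi(t+\rho,\omega,\psi)(\zeta)=\Phi(t,\theta_{\rho}\omega,\cdot)\circ\Phi(\rho,\omega,\psi)(\zeta)$ directly at the level of segment evaluation, which forces a three-case analysis according to the sign of $t+\rho+\zeta$ and $\rho+\zeta$ (both nonnegative; both arguments still in the initial interval; the mixed case). You instead show that $p(r):=u(r+s)$ solves the integral equation \eqref{3.13} driven by $\theta_{s}\omega$ with initial segment $u_{s}$, and close the loop by uniqueness of mild solutions (available from the contraction in Theorem \ref{thm1}); this lifts the identity to whole trajectories and makes the case distinction on $\zeta$ automatic, since $p$ agrees with $u_{s}$ on $[-\tau,0]$ by construction. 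That is a cleaner and less error-prone route, at the modest cost of having to cite uniqueness explicitly. You also address the measurability requirement (i) and the identity (ii), which the paper's proof omits entirely, and your remark that $X=BUC(\mathbb{R}_{+},\mathbb{R})$ is not separable--so that the Carath\'eodory argument must be handled with care, e.g.\ via the explicit Picard iterates--is a genuine point that the paper silently passes over. In short: same key computation, but a different and arguably more robust way of concluding, plus coverage of the two RDS axioms the paper leaves implicit.
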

\begin{proof} We prove the result in three  cases. In the situation  $t, \rho \geq \tau$ so that $t+s, \rho+s \geq 0,$ for all $s \in[-\tau, 0]$, we have
 \begin{equation}\label{3.23}
\begin{aligned}
\Phi(t+\rho, \omega, \psi)(\zeta)=&
S(t+\zeta+\rho)\psi(0)+\int_{0}^{t+\zeta+\rho} S(t+\zeta+\rho-r) [F\left(v_{r}+z\left(\theta_{r+\cdot} \omega\right)\right)+\Delta z\left(\theta_{r} \omega\right)] d r\\
&=S(t+\zeta) S(\rho)\psi(0)+S(t+\zeta) \int_{0}^{\rho} S(\rho-r) [F\left(v_{r}+z\left(\theta_{r+\cdot} \omega\right)\right) +\Delta z\left(\theta_{r} \omega\right)] d r\\
&+\int_{\rho}^{t+\zeta+\rho}  S(t+\zeta+\rho-r) [F\left(v_{r}+z\left(\theta_{r+\cdot} \omega\right)\right) +\Delta z\left(\theta_{r} \omega\right)] d r\\
&=S(t+\zeta) [S(\rho)\psi(0)+\int_{0}^{\rho} S(\rho-r) F\left(v_{r}+z\left(\theta_{r+\cdot} \omega\right)\right) d r+\int_{0}^{\rho} S(\rho-r) \Delta z\left(\theta_{r} \omega\right) d r]\\
&+\int_{0}^{t+\zeta} S(t+\zeta-r) F\left(v_{\rho+r}+z\left(\theta_{r+\cdot+\rho} \omega\right)\right) d r+\int_{0}^{t+\zeta} S(t+\zeta-r) \Delta z\left(\theta_{r+\rho} \omega\right) d r\\
&=S(t+\zeta)\Phi(\rho, \omega, \psi)(0)+\int_{0}^{t+\zeta} S(t+\zeta-r) F\left(v_{\zeta+r}+z\left(\theta_{r+\cdot}\theta_\rho \omega\right)\right) d r\\
&+\int_{0}^{t+\zeta} S(t+\zeta-r) \Delta z\left(\theta_r\theta_\rho \omega\right) d r\\
&=\Phi(t, \theta_\rho\omega, \cdot)\Phi(\rho, \omega, \psi)(\zeta),
\end{aligned}
 \end{equation}
which indicates  the cocycle property in this situation.

In the scenario $t+\rho+\zeta \leq 0,$ for $\zeta \in[-\tau, 0]$. Then, it is straightforward to see that
$$
\Phi(t+\rho, \omega, \psi)(\zeta)=\psi(t+\rho+\zeta)=\Phi(\rho, \omega, \psi)(t+\zeta)=\Phi\left(t, \theta_\rho\omega, \cdot\right) \circ \Phi(\rho, \omega, \psi)(\zeta)
$$

When  $t+\rho+\zeta > 0$ but  $\rho+\zeta \leq 0 $ for $\zeta \in[-\tau, 0]$, we have
$$
\psi(\rho+\zeta)(0)=v_{\rho+\zeta}(0)=\Phi(\zeta+\rho, \omega, \psi)(0)=\Phi\left(\rho, \omega, \psi\right)(\zeta)
$$
Moreover, by \eqref{3.22}, one can easily check that
$$\Phi(t+\rho, \omega, \psi)(\zeta)=\Phi\left(t, \theta_\rho\omega, \psi(\rho+\zeta)\right)(0).$$
Therefore, we have
$$
\Phi(t+\rho, \omega, \psi)(\zeta)=\Phi(t, \theta_\rho\omega, \psi)(\rho+\zeta)=\Phi(t, \theta_\rho\omega, \psi(\rho+\zeta))(0)=\Phi\left(t, \theta_\rho\omega, \cdot\right) \circ \Phi(\rho, \omega, \psi)(\zeta).
$$
\end{proof}

By Remark \ref{rem2}, $u(t, \omega, \phi)=v(t, \omega, \psi)+z\left(\theta_{t} \omega\right)$  is the global  solution to \eqref{4} with initial condition $\phi$.  We now define a mapping $\Psi: \mathbb{R}^{+} \times \Omega \times \mathcal{C} \rightarrow \mathcal{C}$ by
$\Psi(t, \omega, \phi)=u_{t}(\cdot, \omega, \phi)=v_{t}(\cdot, \omega,\psi)+z\left(\theta_{t+.} \omega\right)$,
where $u_{t}(\zeta, \omega, \phi)=u(t+\zeta, \omega, \phi)$ for $\zeta \in[-\tau, 0]$. By Theorem \ref{thm3.2} and the cocycle property of $z$, $\Psi$ is an $\mathrm{RDS}$ on $\mathcal{C}$ generated by  \eqref{4}.

\section{Existence of random  attractors}
In this  section, we are concerned with the existence of tempered pullback attractors for the SNDRDE \eqref{4} by first establishing a uniform estimation for the solution and then proving that $\Psi$ is $\mathcal{D}$-pullback asymptotically compact. Nevertheless, due to the non-compactness of the spatial domain, it is quite difficult to prove the asymptotically compact of $\Psi$ with respect to the usual supreme norm. Hence, similar to \cite{YCWT}, we introduce another more suitable topology called the compact open topology induced by the norms $ \|\varphi\|_{co}^X=\sum_{n\geq 1}2^{-n}\sup\{|\varphi(x)|:x\in [0, n], \ n\in \mathbb{N}\}$ for all $\varphi\in X$ and $\|\phi\|_{co}^\mathcal{C}= \sup\{\|\phi(\theta)\|_{co}^X:\theta\in [-\tau,0]\}$ for all $ \phi\in \mathcal{C}$ respectively to describe the pullback asymptotic compactness of the RDS $\Psi$ generated by \eqref{4}. Moreover, we use $X_{co}$ and $C_{co}$ to denote  the spaces $(X, \|\cdot\|_{co}^X)$ and $(C, \|\cdot\|_{co}^\mathcal{C})$ respectively.

In order to adopt compact open topology to describe the global dynamics of ($\ref{4}$), we first introduce without proof the following lemma, which gives sufficient and necessary condition for a sequence to be convergent and pre-compact with respect to the compact open topology. For details of the proof, the readers are referred to Lemma 2.1 in \cite{W}.
\begin{lem}\label{lem5.1}  Given $r>0$. Let $B_r=\{\phi\in * :\|\phi\|_*\leq r\}$  and $d_r(\phi,\psi)=\|\phi-\psi\|_{co}^{*}$, where $*$ stands for $X$ or $C$. Then the following statements are true:
\item (i) For any $\phi_n,\phi \in B_r$ with $ n\in \mathbb{N}$, $\lim_{n\rightarrow\infty}d_r(\phi_n,\phi)=0$ if and only if $$\lim_{n\rightarrow \infty}\sup\{|\phi_n (\zeta,x)-\phi(\zeta,x)|:\zeta\in[-\tau,0], x\in I\}=0$$ for any bounded domain $I=[0, i] \subset \mathbb{R_{+}}$ for all $i\in \mathbb{R}_+$.
\item (ii) Let $A\subseteq B_r$. Then $A$ is pre-compact if and only if $A_{I}=\{\varphi|_{I}:\varphi \in A\}$ is a family of equicontinuous functions for any domain $I= [0, i] \subset \mathbb{R_{+}}$.
\end{lem}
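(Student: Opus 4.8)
The plan is to treat both items as concrete instances of the classical fact that, on a bounded ball, the metric $d_r$ is merely a metrization of the topology of uniform convergence on the compact intervals $[0,i]\subset\mathbb{R}_+$; once this is made quantitative, (i) becomes a short $\varepsilon$-argument exploiting the geometric weights $2^{-n}$, and (ii) becomes the Arzel\`a--Ascoli theorem combined with a Cantor diagonal extraction. Throughout I would write $*$ for either $X$ or $\mathcal{C}$, use that $\|\phi-\psi\|_\infty\le 2r$ for $\phi,\psi\in B_r$, and in the $\mathcal{C}$-case identify $\phi\in\mathcal{C}$ with the jointly continuous function $(\theta,x)\mapsto\phi(\theta)(x)$ on $[-\tau,0]\times\mathbb{R}_+$ (joint continuity follows from continuity of $\phi:[-\tau,0]\to X$ in the sup norm together with uniform continuity of each $\phi(\theta)$).

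For (i) I would prove the two implications separately. For the forward implication, fix $I=[0,i]$ and, enlarging $i$ to $\lceil i\rceil$, assume $i\in\mathbb{N}$. Keeping only the $n=i$ term of the defining series gives, in the $X$-case, $2^{-i}\sup_{x\in[0,i]}|\phi_n(x)-\phi(x)|\le\|\phi_n-\phi\|_{co}^X=d_r(\phi_n,\phi)$, hence $\sup_{[0,i]}|\phi_n-\phi|\le 2^{i}d_r(\phi_n,\phi)\to0$; in the $\mathcal{C}$-case the same bound holds for each $\theta$, and taking $\sup_\theta$ on the left yields $\sup\{|\phi_n(\zeta,x)-\phi(\zeta,x)|:\zeta\in[-\tau,0],\ x\in I\}\le 2^{i}\|\phi_n-\phi\|_{co}^{\mathcal{C}}\to0$. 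For the converse, given $\varepsilon>0$ I would choose $N$ with $2r\,2^{-N}<\varepsilon/2$; splitting the series at $N$, the tail is bounded by $2r\sum_{k>N}2^{-k}<\varepsilon/2$, while the head is at most $\sup\{|\phi_n(\zeta,x)-\phi(\zeta,x)|:\zeta\in[-\tau,0],\ x\in[0,N]\}$ (after pulling $\sup_\theta$ inside the finite sum in the $\mathcal{C}$-case, using that the supremum of a sum is bounded by the sum of suprema), which tends to $0$ by the hypothesis applied with $I=[0,N]$. Hence $d_r(\phi_n,\phi)<\varepsilon$ for all large $n$.

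For (ii), the forward implication follows because, by (i), the restriction map $R_I:(B_r,d_r)\to(C(I),\|\cdot\|_\infty)$ (with $C(I)$ replaced by $C([-\tau,0]\times I)$ in the $\mathcal{C}$-case) is continuous, so $A_I=R_I(A)$ is pre-compact in that sup-norm space, and Arzel\`a--Ascoli forces $A_I$ to be equicontinuous. For the converse I would take any sequence $(\varphi_k)\subset A$: for each fixed $n$ the restrictions of the $\varphi_k$ to $[0,n]$ (resp.\ to $[-\tau,0]\times[0,n]$) form a family bounded by $r$ and equicontinuous, so Arzel\`a--Ascoli yields a subsequence converging uniformly there; a diagonal argument over $n=1,2,\dots$ produces a single subsequence converging uniformly on every $[0,n]$, and by (i) this subsequence converges in $(*,d_r)$. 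Thus every sequence in $A$ has a $d_r$-convergent subsequence, i.e.\ $A$ is pre-compact.

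The one genuinely delicate point — and the reason the original source spells it out — will be the precise meaning of ``convergent in $(*,d_r)$'' in this last step: since uniform convergence on compacts does not preserve uniform continuity, the diagonal limit need not lie in $X=BUC(\mathbb{R}_+,\mathbb{R})$, so $(B_r,d_r)$ is not complete; one must therefore either read pre-compactness relative to the completion of $(B_r,d_r)$ — a ball in $C(\mathbb{R}_+,\mathbb{R})$ with the standard compact-open metric — or invoke the stronger, globally-uniform-in-$x$ equicontinuity that is in fact available for the families arising in Sections 4 and 5. Modulo this caveat the remaining work is purely the bookkeeping that carries the estimates through the delay variable, which is precisely the argument of Lemma 2.1 in \cite{W}, to which we refer for the complete details.
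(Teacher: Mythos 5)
The paper does not actually prove this lemma: it is stated ``without proof'' with a pointer to Lemma~2.1 of \cite{W} (evidently intended to be the corresponding lemma of \cite{YCWT}), so there is no in-paper argument to compare yours against. Your proof is the standard one and, for part (i) and the ``only if'' half of part (ii), it is complete and correct: the single-term lower bound $2^{-\lceil i\rceil}\sup_{[0,\lceil i\rceil]}|\phi_n-\phi|\le d_r(\phi_n,\phi)$ gives the forward implication, the tail estimate $2r\sum_{k>N}2^{-k}$ gives the converse, and continuity of the restriction maps plus the necessity half of Arzel\`a--Ascoli gives equicontinuity of $A_I$ from pre-compactness of $A$. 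The handling of the delay variable (pulling $\sup_\theta$ through the finite head of the series, and working with the jointly continuous representative on $[-\tau,0]\times[0,i]$) is also fine.

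The caveat you raise in your last paragraph is not a defect of your write-up but a genuine imprecision in the lemma as stated, and you are right to flag it. In $X=BUC(\mathbb{R}_+,\mathbb{R})$ the ball $B_r$ is not $d_r$-complete: for instance, suitable truncations of $\sin(x^2)$ lie in $B_1$, are equicontinuous on every $[0,i]$, and converge uniformly on compacts to $\sin(x^2)\notin BUC$, so the set of truncations satisfies the right-hand side of (ii) while no subsequence converges in $(B_r,d_r)$ to an element of $X$. Hence the ``if'' direction of (ii) is only valid if pre-compactness is read in the completion (a closed ball of $C(\mathbb{R}_+,\mathbb{R})$ with the compact-open metric), or if one uses the stronger equicontinuity --- uniform over all of $\mathbb{R}_+$, coming from the gradient bound $|\partial_x S(t)[\phi](x)|\le e^{-\mu t}\|\phi\|/\sqrt{\pi t}$ of Lemma~\ref{lem2.1}(iii) --- which is what the absorbing sets of Section~4 actually enjoy and which forces the diagonal limit to be Lipschitz, hence in $X$. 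Either reading makes the asymptotic-compactness argument of Lemma~\ref{lem4.4} go through; your proof together with this clarification is what the paper should have recorded.
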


Throughout the rest of this paper, we always use $\mathcal{D}$ to denote the collection of all families of tempered nonempty subsets of  $C_{co}$. The letters $c$ and $c_i, (i=1,2,\cdots)$ are general positive
constants whose values are not significant. Moreover, as for the asymptotic behavior, we always assume that $t>\tau$ in the remaining part of this paper for convenience. The following lemma shows that the RDS $\Psi$ has a random absorbing set respect to the compact open topology.

\begin{lem}\label{lem4.1}
Assume that $\left(\mathbf{H}\right)$ is satisfied and $\varepsilon L_{f}e^{\mu \tau}-\mu<0$, then there exists $\{K(\omega)\}_{\omega \in \Omega} \in \mathcal{D}$ satisfying that, for any $B=\{B(\omega)\}_{\omega \in \Omega} \in \mathcal{D}$ and $P$-a.e. $\omega \in \Omega$, there is $T_{B}(\omega)>0$ such that
$$
\Psi \left(t, \theta_{-t} \omega, B\left(\theta_{-t} \omega\right)\right) \subseteq K(\omega) \quad \text { for all } t \geqslant T_{B}(\omega),
$$
that is, $\{K(\omega)\}_{\omega \in \Omega}$ is a random absorbing set for $\Psi$ in $\mathcal{D}$.
\end{lem}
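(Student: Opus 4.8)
The plan is to reduce the statement to a pathwise a priori bound on $\|v(t,\omega,\psi)\|_{X}$, where $v$ solves the transformed problem \eqref{3.10}--\eqref{3.12}, since $\Psi(t,\omega,\phi)=v_{t}(\cdot,\omega,\psi)+z(\theta_{t+\cdot}\omega)$ and the term $z(\theta_{t+\cdot}\omega)$ is controlled directly by \eqref{3.8}--\eqref{3.9}. Starting from the variation-of-constants formula \eqref{3.13}, I would take $X$-norms and apply Lemma \ref{lem2.1}(iii) (so $\|S(t)\psi\|\le e^{-\mu t}\|\psi\|$), Lemma \ref{lem3.2} ($\|K\|\le 1$) and the sublinear bound $|f(s)|\le L_{f}|s|$ coming from $(\mathbf H)$ (recall $f(0)=0$), which yields, for $P$-a.e. $\omega$ and all $t\ge 0$,
\[
\|v(t,\omega,\psi)\|\le e^{-\mu t}\|\psi(0)\|+\varepsilon L_{f}\int_{0}^{t}e^{-\mu(t-r)}\big(\|v(r-\tau,\omega,\psi)\|+\|z(\theta_{r-\tau}\omega)\|\big)dr+\int_{0}^{t}e^{-\mu(t-r)}\|\Delta z(\theta_{r}\omega)\|\,dr .
\]
Multiplying by $e^{\mu t}$, setting $y(t)=e^{\mu t}\|v(t,\omega,\psi)\|$ and using $e^{\mu r}\|v(r-\tau)\|\le e^{\mu\tau}\sup_{-\tau\le s\le r}y(s)$, the delay is absorbed into the factor $e^{\mu\tau}$; since the right-hand side is nondecreasing and $y(s)\le\|\psi\|_{\mathcal C}$ on $[-\tau,0]$, one arrives at $Y(t)\le\|\psi\|_{\mathcal C}+h(t,\omega)+\varepsilon L_{f}e^{\mu\tau}\int_{0}^{t}Y(r)\,dr$, with $Y(t)=\sup_{-\tau\le s\le t}y(s)$ and $h(t,\omega)$ the pathwise noise integral.

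Next I would apply the Gronwall inequality of Lemma \ref{lem2} (with constant coefficient $\varepsilon L_{f}e^{\mu\tau}$ and $g\equiv 1$), which produces the kernel $e^{\varepsilon L_{f}e^{\mu\tau}(t-r)}$, and then multiply by $e^{-\mu t}$ to recover $\|v(t,\omega,\psi)\|$. The point is that the contribution of the initial data is then $\lesssim\|\psi\|_{\mathcal C}\,e^{(\varepsilon L_{f}e^{\mu\tau}-\mu)t}$, which decays \emph{precisely} because the hypothesis reads $\varepsilon L_{f}e^{\mu\tau}-\mu<0$. Replacing $\omega$ by $\theta_{-t}\omega$ and recalling that $\psi=\phi-z(\theta_{\cdot}\theta_{-t}\omega)$ with $\phi\in B(\theta_{-t}\omega)$, the temperedness of $B$ (Definition \ref{defn4}) together with that of $z$ gives $\|\psi\|_{\mathcal C}\,e^{(\varepsilon L_{f}e^{\mu\tau}-\mu)t}\to 0$ uniformly over $\phi\in B(\theta_{-t}\omega)$. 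The noise remainder, evaluated at $\theta_{-t}\omega$ and estimated through \eqref{3.8}--\eqref{3.9}, is bounded as $t\to\infty$ by a tempered random radius $\rho_{0}(\omega)$ independent of $\psi$; adding $\rho_{1}(\omega):=\sup_{\xi\in[-\tau,0]}\|z(\theta_{\xi}\omega)\|$ to account for the conjugacy yields $\limsup_{t\to\infty}\|\Psi(t,\theta_{-t}\omega,\phi)\|_{\mathcal C}\le\rho(\omega):=\rho_{0}(\omega)+\rho_{1}(\omega)$, uniformly over $\phi\in B(\theta_{-t}\omega)$.

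Finally I would set $K(\omega)=\{\varphi\in\mathcal C:\|\varphi\|_{\mathcal C}\le\rho(\omega)+1\}$. This is a closed random set; it is tempered with respect to $C_{co}$ because $\|\cdot\|_{co}^{\mathcal C}\le\|\cdot\|_{\mathcal C}$ and $\rho$ is tempered, hence $K=\{K(\omega)\}\in\mathcal D$; and the $\limsup$ bound above provides, for every $B\in\mathcal D$ and $P$-a.e. $\omega$, a time $T_{B}(\omega)$ after which $\Psi(t,\theta_{-t}\omega,B(\theta_{-t}\omega))\subseteq K(\omega)$, which is the claim. The step I expect to be the main obstacle is the bookkeeping of exponential rates in the last part: once the Gronwall kernel $e^{\varepsilon L_{f}e^{\mu\tau}(t-r)}$ and the prefactor $e^{-\mu t}$ are combined with the growth bounds for $\|z(\theta_{r-t}\omega)\|$ and $\|\Delta z(\theta_{r-t}\omega)\|$, the total exponent must come out nonpositive; this forces one to invoke temperedness of the Ornstein--Uhlenbeck process with a growth exponent chosen strictly below the spectral gap $\mu-\varepsilon L_{f}e^{\mu\tau}$ (legitimate, since \eqref{3.9} continues to hold, after enlarging the tempered random variable, with $\mu/2$ replaced by any $\beta>0$) rather than the fixed $\mu/2$ of \eqref{3.8}--\eqref{3.9}. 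Carrying the delay through the Gronwall step without losing the decaying exponential $e^{(\varepsilon L_{f}e^{\mu\tau}-\mu)t}$ is the other delicate point.
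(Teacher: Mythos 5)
Your overall route (variation of constants via \eqref{3.13}, the bounds of Lemma \ref{lem2.1}(iii) and Lemma \ref{lem3.2}, absorbing the delay into a factor $e^{\mu\tau}$, Gr\"onwall, and then the spectral-gap condition $\varepsilon L_fe^{\mu\tau}-\mu<0$ together with temperedness of the Ornstein--Uhlenbeck process) is exactly the paper's. The genuine gap is in the choice of norm. You run the entire a priori estimate in the supremum norm $\|\cdot\|_{\mathcal C}$ and only pass to $\|\cdot\|_{co}^{\mathcal C}$ at the very end via $\|\cdot\|_{co}^{\mathcal C}\le\|\cdot\|_{\mathcal C}$. That inequality does let your candidate $K(\omega)$ belong to $\mathcal D$, but it does not deliver the absorbing property for arbitrary $B\in\mathcal D$: by the convention fixed before Lemma \ref{lem4.1}, the families in $\mathcal D$ are tempered as subsets of $C_{co}$, i.e.\ $e^{-\beta t}\sup_{\phi\in B(\theta_{-t}\omega)}\|\phi\|_{co}^{\mathcal C}\to 0$, and this gives no control whatsoever on $\sup_{\phi\in B(\theta_{-t}\omega)}\|\phi\|_{\mathcal C}$, which may grow arbitrarily fast in $t$. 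Consequently your decisive step, ``temperedness of $B$ gives $\|\psi\|_{\mathcal C}\,e^{(\varepsilon L_fe^{\mu\tau}-\mu)t}\to 0$,'' is unjustified: the exponential decay has a fixed rate, while the quantity it multiplies is not tempered in the norm you are using. Since the inequality between the two norms goes the wrong way for the initial data, this cannot be patched at the end; the estimate has to be carried out in the compact-open norm from the start.

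That is precisely what the paper does: it first derives the pointwise bound \eqref{4.2}, then multiplies by the weights $2^{-n}$, takes suprema over $[0,n]$ and sums (see \eqref{4.2a}--\eqref{4.2b}), so that the Gr\"onwall argument \eqref{4.3}--\eqref{4.8} involves only $\|\psi\|_{co}^{\mathcal C}$, which \emph{is} controlled by temperedness of $B$ in $\mathcal D$. (This requires the semigroup contraction and the bound $\|K\|\le 1$ to be used in a form compatible with the weighted norm, which is the real content of that reduction.) Your remaining concerns are not obstacles: the delay is handled exactly as you describe, and the noise bookkeeping closes with the $e^{\mu|t|/2}$ temperedness of \eqref{3.8}--\eqref{3.9} already available, since the paper bounds the forcing integral by $cr(\omega)$ \emph{before} applying Gr\"onwall, so the kernel $e^{\varepsilon L_fe^{\mu\tau}(t-s)}$ only ever meets $e^{\mu s}$ and the condition $\varepsilon L_fe^{\mu\tau}<\mu$ suffices; no re-tempering with a smaller exponent is needed.
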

\begin{proof}
We first derive uniform estimate on $v$ by \eqref{3.13} and then obtain the existence of absorbing set of $u$ by $u(t, \omega, \phi)=v(t, \omega, \psi)+z\left(\theta_{t} \omega\right)$. It follows from \eqref{3.13} and Lemma \ref{lem2.1} (i) that for any $t>\tau$, we have
 \begin{equation}\label{4.1}
\begin{aligned}
v(t,\omega,\psi)&=
S(t) \psi(0)+\int_{0}^{t} S(t-s) F\left(v_{s}+z\left(\theta_{s+\cdot} \omega\right)\right) ds+\int_{0}^{t} S(t-s) \Delta z\left(\theta_{s} \omega\right) ds\\
&=e^{-\mu t}U(t) \tilde{\psi}(0)+\int_{0}^{t} e^{-\mu (t-s)}U(t-s) \tilde{F}\left(v_{s}+z\left(\theta_{s+\cdot} \omega\right)\right) d s+\int_{0}^{t}e^{-\mu (t-s)}U(t-s) \Delta \tilde{z}\left(\theta_{s} \omega\right) dr,
\end{aligned}
 \end{equation}
where $ \tilde{\psi}$, $\tilde{F}$ and $\tilde{z}$ represent the odd extension of $\psi, F, z$ with respect to the spatial variable respectively. Therefore, by Lemma \ref{lem2.1}(iii) and lemma \ref{lem3.2}, for any $\zeta\in [-\tau, 0], \ n\in \mathbb{N}, x\in [0, n]$ and all $\psi \in \mathcal{C}$ we have
\begin{equation}\label{4.2}
\begin{aligned}
|v(t+\zeta,\omega,\psi)(x)|&\leq
e^{-\mu(t-\tau)}|\psi(0)(x)|+\varepsilon \int_{0}^{t+\zeta}e^{\mu (s-t-\zeta)}|K[v(s-\tau, \omega, \psi)+z(\theta_{s-\tau} \omega)](x)|ds\\
& +\int_{0}^{t+\zeta} e^{\mu (s-t-\zeta)}| \Delta z\left(\theta_{s} \omega\right)(x)|ds\\
&\leq e^{-\mu(t-\tau)}|\psi(0)(x)|+  \varepsilon L_{f}e^{\mu \tau} \int_{0}^{t} e^{\mu (s-t)}(|v(s-\tau, \omega,\psi)(x)|+|z\left(\theta_{s-\tau} \omega\right)(x)|) \mathrm{d} s\\
&+ e^{\mu \tau} \int_{0}^{t} e^{\mu (s-t)}|\Delta z\left(\theta_{s} \omega\right)(x)|\mathrm{d}s,
\end{aligned}
 \end{equation}
 which implies that
 \begin{equation}\label{4.2a}
\begin{aligned}
\sum_{n\geq 1}2^{-n}\sup_{x\in [0, n]}|v(t+\zeta,\omega,\psi)(x)|&\leq
e^{-\mu(t-\tau)}\sum_{n\geq 1}2^{-n}\sup_{x\in [0, n]}|\psi(0)(x)|+\varepsilon \int_{0}^{t+\zeta}e^{\mu (s-t-\zeta)}\\&\sum_{n\geq 1}2^{-n}\sup_{x\in [0, n]}|K[v(s-\tau, \omega, \psi)
+z(\theta_{s-\tau} \omega)](x)|ds+\\& \int_{0}^{t+\zeta} e^{\mu (s-t-\zeta)}
\sum_{n\geq 1}2^{-n}\sup_{x\in [0, n]}|\Delta z\left(\theta_{s} \omega\right)(x)|ds\\
&\leq e^{-\mu(t-\tau)}\sum_{n\geq 1}2^{-n}\sup_{x\in [0, n]}|\psi(0)(x)|+\varepsilon L_{f}e^{\mu \tau} \int_{0}^{t} e^{\mu (s-t)}\\&(\sum_{n\geq 1}2^{-n}\sup_{x\in [0, n]}|v(s-\tau, \omega,\psi)(x)|+\sum_{n\geq 1}2^{-n}\sup_{x\in [0, n]}|z\left(\theta_{s-\tau} \omega\right)(x)|) \mathrm{d} s\\&+ \int_{0}^{t} e^{\mu (s-t)}\sum_{n\geq 1}2^{-n}\sup_{x\in [0, n]}|\Delta z\left(\theta_{s} \omega\right)(x)|\mathrm{d}s.
\end{aligned}
 \end{equation}
 Therefore, by the definition of compact open topology, we have
\begin{equation}\label{4.2b}
\begin{aligned}
\|v(t+\zeta,\omega,\psi)\|_{co}^X&\leq
e^{-\mu(t-\tau)}\|\psi(0)\|_{co}^X+\varepsilon \int_{0}^{t+\zeta}e^{\mu (s-t-\zeta)}\|K[v(s-\tau, \omega, \psi)+z(\theta_{s-\tau} \omega)]\|_{co}^Xds\\
& +\int_{0}^{t+\zeta} e^{\mu (s-t-\zeta)}\| \Delta z\left(\theta_{s} \omega\right)\|_{co}^Xds\\
&\leq e^{-\mu(t-\tau)}\|\psi(0)\|_{co}^X+  \varepsilon L_{f}e^{\mu \tau} \int_{0}^{t} e^{\mu (s-t)}(\|v(s-\tau, \omega,\psi)\|_{co}^X+\|z\left(\theta_{s-\tau} \omega\right)\|_{co}^X) \mathrm{d} s\\
&+ e^{\mu \tau} \int_{0}^{t} e^{\mu (s-t)}\|\Delta z\left(\theta_{s} \omega\right)\|_{co}^X\mathrm{d}s,
\end{aligned}
 \end{equation}
for  $P$-a.e. $\omega \in \Omega$. Keep in mind that $\left\|v_{t}\right\|_{co}^{\mathcal{C}}=\sup \left\{\|v(t+\zeta)\|_{co}^X: \zeta \in[-\tau, 0]\right\}$. Hence, we can obtain
\begin{equation}\label{4.3}
\begin{aligned}
\left\|v_{t}(\cdot, \omega,\psi)\right\|_{co}^{\mathcal{C}}& \leq  e^{\mu \tau}[e^{-\mu t}\|\psi\|_{co}^{\mathcal{C}}+  \varepsilon L_{f}\int_{0}^{t} e^{\mu (s-t)}\left\|v_{s}(\cdot, \omega,\psi)\right\|_{co}^{\mathcal{C}} \mathrm{d}s\\
&+\int_{0}^{t} e^{\mu (s-t)} ( \|\Delta z\left(\theta_{s} \omega\right)\|_{co}^{X}+\varepsilon L_f \|z\left(\theta_{s-\tau} \omega\right)\|_{co}^{X})\mathrm{d} s.
\end{aligned}
\end{equation}
By replacing $\omega$ by $\theta_{-t} \omega$, we derive  from \eqref{4.3} that, for all $t \geq \tau,$
\begin{equation}\label{4.4}
\begin{aligned}
\left\|v_{t}\left(\cdot, \theta_{-t} \omega, \psi\left(\theta_{-t} \omega\right)\right)\right\|_{co}^{\mathcal{C}}\leq & e^{\mu \tau} [e^{-\mu t}\left\|\psi\left(\theta_{-t} \omega\right)\right\|_{co}^{\mathcal{C}}+  \int_{0}^{t} e^{\mu (s-t)} ( \|\Delta z\left(\theta_{s-t} \omega\right)\|_{co}^{X}+\varepsilon L_f \|z\left(\theta_{s-t-\tau} \omega\right)\|_{co}^{X}) \mathrm{d} s  \\
&+\varepsilon L_{f} \int_{0}^{t} e^{\mu(s-t)}\left\|v_{s}\left(\cdot, \theta_{-t} \omega, \psi\left(\theta_{-t} \omega\right)\right)\right\|_{co}^{\mathcal{C}} \mathrm{d}s].
\end{aligned}
\end{equation}
Since $g_j $ are twice continuously differentiable and $z\left( \omega\right)(x)=\sum_{j=1}^{m} g_{j}(x) z_{j}\left( \omega_{j}\right)$, there exists constant $c$ such that $p_1(\omega)\triangleq\|\Delta z\left(\omega\right)\|_{co}^X+\varepsilon L_f \|z\left(\theta_{-\tau}\omega\right)\|_{co}^X\leq c \sum_{j=1}^{m}\left|z_{j}\left(\omega_{j}\right)\right|^{2}$. Therefore, it follows from \eqref{3.8} and \eqref{3.9} that
\begin{equation}\label{4.5}
\begin{aligned}
\int_{0}^{t} e^{\mu (s-t)} p_{1}\left(\theta_{s-t} \omega\right) \mathrm{d} s \leq c \int_{0}^{t} e^{\frac{\mu}{2}(s-t)} r(\omega) \mathrm{d} s \leq c r(\omega).
\end{aligned}
\end{equation}
Incorporating \eqref{4.5} into \eqref{4.4}  gives rise to
\begin{equation}\label{4.6}
\begin{aligned}
\left\|v_{t}\left(\cdot, \theta_{-t} \omega, \psi\left(\theta_{-t} \omega\right)\right)\right\|_{co}^{\mathcal{C}} \leq & e^{\mu \tau} [e^{-\mu t}\left\|\psi\left(\theta_{-t} \omega\right)\right\|_{co}^{\mathcal{C}}+\varepsilon L_{f} \int_{0}^{t} e^{\mu(s-t)}\left\|v_{s}\left(\cdot, \theta_{-s} \omega, \psi\left(\theta_{-s} \omega\right)\right)\right\|_{co}^{\mathcal{C}}\mathrm{d}s+c r(\omega)].
\end{aligned}
\end{equation}
Multiply the both sides of \eqref{4.6} by $e^{\mu t}$ and adopt the Gr\"{o}nwall inequality, we have
\begin{equation}\label{4.7}
\begin{aligned}
e^{\mu t}\left\|v_{t}\left(\cdot, \theta_{-t} \omega, \psi\left(\theta_{-t} \omega\right)\right)\right\|_{co}^{\mathcal{C}}\leq & e^{\mu \tau}(\left\|\psi\left(\theta_{-t} \omega\right)\right\|_{co}^{\mathcal{C}}+ce^{\mu t}r(\omega))+ \varepsilon e^{\mu\tau}L_{f}  \int_{0}^{t}(\left\|\psi\left(\theta_{-t} \omega\right)\right\|_{co}^{\mathcal{C}}\\+
& ce^{\mu s}r(\omega))e^{\varepsilon L_{f}e^{\mu \tau}(t-s)}\mathrm{d} s\\
\leq & ce^{\mu \tau} e^{\mu t}r(\omega)+e^{\mu \tau}\left\|\psi\left(\theta_{-t} \omega\right)\right\|_{co}^{\mathcal{C}}+\varepsilon e^{\mu\tau}L_{f}\left\|\psi\left(\theta_{-t} \omega\right)\right\|_{co}^{\mathcal{C}} \int_{0}^{t}e^{\varepsilon L_{f}e^{\mu \tau}(t-s)}\mathrm{d} s\\
&+c\varepsilon e^{\mu\tau}L_{f}r(\omega)\int_{0}^{t}e^{\varepsilon L_{f}e^{\mu \tau}(t-s)}e^{\mu s}\mathrm{d} s.
\end{aligned}
\end{equation}
Therefore, we have
\begin{equation}\label{4.8}
\begin{aligned}
\left\|v_{t}\left(\cdot, \theta_{-t} \omega, \psi\left(\theta_{-t} \omega\right)\right)\right\|_{co}^{\mathcal{C}} \leq & ce^{\mu \tau}r(\omega)+e^{-\mu t}e^{\mu \tau}\left\|\psi\left(\theta_{-t} \omega\right)\right\|_{co}^{\mathcal{C}}+\left\|\psi\left(\theta_{-t} \omega\right)\right\|_{co}^{\mathcal{C}}(e^{(\varepsilon L_{f}e^{\mu \tau}-\mu)t}-1)\\  &+\frac{c\varepsilon e^{\mu\tau}L_{f}}{\mu-\varepsilon e^{\mu\tau}L_{f}}[e^{-\varepsilon e^{\mu\tau}L_{f}}-e^{-\mu t}]r(\omega).
\end{aligned}
\end{equation}
Note that $\psi(\omega)=\phi-z(\theta_{t+\cdot}\omega).$ The above estimate \eqref{4.8}  implies that, for all $t \geq \tau,$
\begin{equation}\label{4.11}
\begin{aligned}
\left\|u_{t}\left(\cdot, \theta_{-t} \omega, \phi\right)\right\|_{co}^{\mathcal{C}} \leq & \left\|v_{t}\left(\cdot, \theta_{-t} \omega, \psi\left(\theta_{-t} \omega\right)\right)\right\|_{co}^{\mathcal{C}}+\|z(\theta_{-t}\theta_{t+\cdot}\omega)\|_{co}^{\mathcal{C}} \\
\leq & ce^{\mu \tau}r(\omega)+e^{-\mu t}e^{\mu \tau}\left\|\psi\left(\theta_{-t} \omega\right)\right\|_{co}^{\mathcal{C}}+\left\|\psi\left(\theta_{-t} \omega\right)\right\|_{co}^{\mathcal{C}}(e^{(\varepsilon L_{f}e^{\mu \tau}-\mu)t}-1)\\  &+\frac{c\varepsilon e^{\mu\tau}L_{f}}{\mu-\varepsilon e^{\mu\tau}L_{f}}[e^{-\varepsilon e^{\mu\tau}L_{f}}-e^{-\mu t}]r(\omega)+ce^{\frac{\mu \tau}{2}}r(\omega).
\end{aligned}
\end{equation}
Therefore, if $\phi \in \mathcal{D}\left(\theta_{-t} \omega\right)$ and $\varepsilon L_{f}e^{\mu \tau}-\mu<0$, then there exists a $T_{\mathcal{D}}>\tau$ such that, for all $t \geq T_{D}(\omega)$,
\begin{equation}\label{4.12}
\begin{aligned}
e^{-\mu t}e^{\mu \tau}\left\|\psi\left(\theta_{-t} \omega\right)\right\|_{co}^{\mathcal{C}}+\left\|\psi\left(\theta_{-t} \omega\right)\right\|_{co}^{\mathcal{C}}(e^{(\varepsilon L_{f}e^{\mu \tau}-\mu)t}-1)-\frac{c\varepsilon e^{\mu\tau}L_{f}}{\mu-\varepsilon e^{\mu\tau}L_{f}}e^{-\mu t}r(\omega) \leq c_1(\omega),
\end{aligned}
\end{equation}
which, along with \eqref{4.11} shows that, for all $t \geq T_{\mathcal{D}}(\omega)$
\begin{equation}\label{4.13}
\begin{aligned}
\left\|u_{t}\left(\cdot, \theta_{-t} \omega, \phi\right)\right\|_{co}^{\mathcal{C}} \leq  2ce^{\mu \tau}r(\omega)+\frac{c\varepsilon e^{\mu\tau}L_{f}}{\mu-\varepsilon e^{\mu\tau}L_{f}}e^{-\varepsilon e^{\mu\tau}L_{f}}r(\omega)+c_1(\omega).
\end{aligned}
\end{equation}
Given $\omega \in \Omega,$ define
\begin{equation}\label{4.14}
\begin{aligned}
K(\omega)=\{\varphi \in \mathcal{C}:\|\varphi\|_{co}^{\mathcal{C}} \leq 2ce^{\mu \tau}r(\omega)+\frac{c\varepsilon e^{\mu\tau}L_{f}}{\mu-\varepsilon e^{\mu\tau}L_{f}}e^{-\varepsilon e^{\mu\tau}L_{f}}r(\omega)+c_1(\omega)\}.
\end{aligned}
\end{equation}
Then, $K=\{K(\omega)\}_{\omega \in \Omega} \in \mathcal{D}$. Furthermore, \eqref{4.13} implies that $K$ is a random absorbing set for the RDS $\Phi$ in $\mathcal{D}$.
\end{proof}

In the sequel, we first show that $\Psi$ is a continuous random semiflow with respect to $\|\cdot \|^{\mathcal{C}}_{co}$.

\begin{lem}\label{lem4.2}
Assume  (\textbf{H})  is satisfied and $f$ is globally bounded, then  $\Phi\left(t, \theta_{-t} \omega, \phi\right)$ is a continuous random semiflow  with respect to the compact open topology induced by the norm $\|\cdot\|^{\mathcal{C}}_{co}$.
\end{lem}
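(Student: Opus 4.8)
The semiflow (cocycle) identity for $\Phi$ has already been established in Theorem~\ref{thm3.2}, and the joint measurability of $(t,\omega,\psi)\mapsto\Phi(t,\omega,\psi)$ together with the continuity of $t\mapsto\Phi(t,\omega,\psi)$ are read off from the mild--solution representation \eqref{3.13} and the continuity-in-time estimates carried out in the proof of Theorem~\ref{thm1} (which rely on Lemma~\ref{lem2.1}(iv)); none of this changes when the sup norm is replaced by $\|\cdot\|_{co}^{\mathcal{C}}$. Hence the only new point is to prove that, for $P$-a.e. $\omega\in\Omega$ and every fixed $t>\tau$, the map $\psi\mapsto\Phi(t,\omega,\psi)$ is continuous for the compact open metric $d_r$ of Lemma~\ref{lem5.1}, i.e. on each ball $B_r=\{\psi\in\mathcal{C}:\|\psi\|_{\mathcal{C}}\le r\}$. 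The plan is: express the difference of two solutions through the variation of constants formula \eqref{3.13}; dominate the moduli of the linear term $S(\cdot)$ and of the nonlocal term $K$ by splitting the underlying Gaussian integrals into a near--field part, controlled by the supremum over a compact interval, and a far--field part, controlled by the uniform $L^\infty$ bound on the solutions times a Gaussian tail; and finally close the estimate with the Gr\"onwall inequality of Lemma~\ref{lem2}, exactly as in the derivation of \eqref{4.3}.

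Concretely, fix $t>\tau$, $\omega$ off a null set and $r>0$, and take $\psi^1,\psi^2\in B_r$. Since $f$ is globally bounded, the a priori bound in Theorem~\ref{thm1} furnishes a constant $R=R(r,\omega,t)>0$ with $\|v_s(\cdot,\omega,\psi^i)\|_{\mathcal{C}}\le R$ for $s\in[0,t]$, $i=1,2$; put $w=v(\cdot,\omega,\psi^1)-v(\cdot,\omega,\psi^2)$, so $\|w(s)\|\le 2R$ for all $s$. For $\zeta\in[-\tau,0]$ and $s:=t+\zeta\ge t-\tau>0$, \eqref{3.13} gives
\[
w(s)(x)=S(s)\big(\psi^1(0)-\psi^2(0)\big)(x)+\int_0^s S(s-\rho)\Big[F\big(v^1_\rho+z(\theta_{\rho+\cdot}\omega)\big)-F\big(v^2_\rho+z(\theta_{\rho+\cdot}\omega)\big)\Big](x)\,d\rho .
\]
Using $S(t)[\phi]=e^{-\mu t}U(t)[\tilde\phi]$ (Lemma~\ref{lem2.1}(i)), the positivity of the heat kernel and of $\Gamma$, Lemma~\ref{lem3.2}, and the Lipschitz bound \eqref{3.3a}, the modulus of each term is bounded by the action of positivity preserving operators of norm $\le e^{-\mu(\cdot)}$ applied to $\varepsilon L_f\,|w(\rho-\tau,\cdot)|$. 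Splitting, for $x\in[0,n]$, the Gaussian integrals defining $U(s-\rho)$, $U(s)$ and $K$ into $|y|\le 2n$ and $|y|>2n$, the first piece is $\le\sup_{[0,2n]}|\,\cdot\,|$ and the second is $\le 2R\,\kappa_n$ with $\kappa_n=\sup_{\sigma\in(0,t]}\int_{|u|>n}(4\pi\sigma)^{-1/2}e^{-u^2/(4\sigma)}\,du\to0$ as $n\to\infty$ (here $\sigma$ stands for $\alpha$, $s-\rho$ or $s$, as appropriate).

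Weighting by $2^{-n}$ and summing, these estimates yield an inequality of the type
\[
\|w_t\|_{co}^{\mathcal{C}}\le c\,e^{\mu\tau}\,\|\psi^1-\psi^2\|_{co}^{\mathcal{C}}+\varepsilon L_f e^{\mu\tau}\int_0^t e^{\mu(\rho-t)}\|w_\rho\|_{co}^{\mathcal{C}}\,d\rho+c\,R\,\kappa_n ,
\]
so Lemma~\ref{lem2} gives $\|w_t\|_{co}^{\mathcal{C}}\le C(r,\omega,t)\big(\|\psi^1-\psi^2\|_{co}^{\mathcal{C}}+R\,\kappa_n\big)$. Applying this with $\psi^2=\psi$ fixed and $\psi^1=\psi^m\to\psi$ in $d_r$: the first term tends to $0$ by Lemma~\ref{lem5.1}(i); the term $R\,\kappa_n$ is independent of $m$ and can be made as small as desired by enlarging $n$; therefore $\limsup_{m\to\infty}\|v_t(\cdot,\omega,\psi^m)-v_t(\cdot,\omega,\psi)\|_{co}^{\mathcal{C}}=0$, and Lemma~\ref{lem5.1}(i) translates this into $\Phi(t,\omega,\psi^m)\to\Phi(t,\omega,\psi)$ in the compact open topology. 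Together with the already noted $t$-continuity and measurability, this shows $\Phi$ is a continuous random semiflow with respect to $\|\cdot\|_{co}^{\mathcal{C}}$.

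The main obstacle is the interplay between the non-compactness of $\mathbb{R}_+$ and the nonlocality of $\Gamma$: neither $S(t)$ nor $K$ is a contraction, nor even a bounded operator with a tail-independent constant, for the compact open norm $\|\cdot\|_{co}^X$, so a naive Gr\"onwall estimate in $\|\cdot\|_{co}^{\mathcal{C}}$ does not close. The way out is to exploit the uniform $L^\infty$ bound on the solutions — available here precisely because $f$ is globally bounded — together with the Gaussian decay of both kernels, so as to collect all far-field contributions into a single error $R\kappa_n$ that is uniformly small along the approximating sequence; once this error is absorbed, the compact open Gr\"onwall estimate runs exactly as in the proof of Lemma~\ref{lem4.1}.
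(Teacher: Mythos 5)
Your overall strategy is the right one, and it is in fact closer to being a complete argument than the paper's own treatment: the paper decomposes the difference into $I_1+I_2+I_3+I_4$ and, at exactly the point you identify as the crux (continuous dependence of the nonlinear integral term on the initial data in the compact open topology, the term $I_{22}$), writes an inequality bounding a value at $x\in I$ by $\|v_r^n-v_r\|_{\mathcal{C}_{co}}$ — which is not legitimate, since $S$ and $K$ are nonlocal and the compact open norm does not control far-field values — and then defers to Theorem 2.8-(i) of \cite{YCWT}. Your near-field/far-field splitting of the Gaussian kernels, with the far field controlled by the uniform $L^\infty$ bound $2R$ times a Gaussian tail, is precisely the mechanism that makes such an estimate honest. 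You also correctly isolate why it is needed. (Two smaller remarks: the paper proves joint continuity along $t_n\to t$, $\phi_n\to\phi$, including the dependence through $\theta_{-t_n}\omega$ in the terms $I_{24},I_{31}$--$I_{33},I_4$, whereas you treat only continuity in $\psi$ for fixed $t$ and $\omega$; that suffices for Definition~\ref{defn2} but is less than what the paper establishes.)

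However, the step where you close the estimate is not correct as written. For $x\in[0,n]$ the near-field piece is bounded by $\sup_{[0,2n]}|w(\rho-\tau,\cdot)|$, and after weighting by $2^{-n}$ and summing you need $\sum_{n\ge1}2^{-n}\sup_{[0,2n]}|w|\le c\,\|w\|_{co}^{X}+(\text{small})$ with an absolute constant $c$. This is false: writing $a_m=\sup_{[0,m]}|w|$, one has $2^{-n}a_{2n}\le\min\{2^{n}\|w\|_{co}^{X},\,2^{-n}\cdot 2R\}$, and optimizing shows $\sum_n2^{-n}a_{2n}$ can be of order $\sqrt{R\,\|w\|_{co}^{X}}$, i.e.\ the "constant" degenerates as $\|w\|_{co}^{X}\to0$, which is exactly the regime you need. (A Bihari-type inequality with right-hand side $\sqrt{u}$ does not yield $u\to0$ either.) The same defect shows up formally in your displayed inequality, which retains the summation index $n$ as a free parameter in the error $cR\kappa_n$ after the sum over $n$ has been performed. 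Splitting at $|y|>n+L$ instead of $|y|>2n$ gives a linear constant $2^{L}$ but then the Gr\"onwall factor $e^{c2^{L}t}$ overwhelms $\kappa_L\to0$. The repair is to abandon the single weighted Gr\"onwall inequality and instead iterate the integral inequality finitely many times over an expanding family of compacts $[0,i],[0,i+L],\dots,[0,i+mL]$: after $m$ iterations the remainder carries a factor $(\varepsilon L_f t)^m/m!\cdot 2R$, which is made small first by choosing $m$, then $L$ is chosen to kill the accumulated tail errors $CR\,\mathrm{tail}(L)$, and finally $\sup_{[0,i+mL]}\|\psi^1-\psi^2\|\to0$ by compact open convergence. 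With that modification (or by genuinely invoking the argument of \cite{YCWT}), your proof goes through.
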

\begin{proof}
It suffices to prove that for any given $\phi_n \in D\left(\theta_{-t_{n}} \omega\right)$ and $\phi \in D\left(\theta_{-t} \omega\right)$ such that $t_n\rightarrow t, \phi_n\rightarrow \phi$ then the sequence $\Psi\left(t_{n}, \theta_{-t_{n}} \omega, \phi_n\right)$ convergent to $\Psi\left(t, \theta_{-t} \omega, \phi\right)$ with respect to $\|\cdot\|^{\mathcal{C}}_{co}$. Here, for convenience, we assume that $t_n\geq t$ since the case $t_n< t$ can be proved similarly.

To prove the continuity, define  $P: \mathbb{R}_{+} \times\Omega\times \mathcal{C} \rightarrow X_{co}$ by $P(t, \theta_{-t} \omega, \phi)(x)=\Psi(t, \theta_{-t} \omega, \phi)(0)(x)$ for all $(t, \phi) \in \mathbb{R}_{+} \times \mathcal{C}$. By Theorem \ref{thm3.2} and the cocycle property of $z$, we only need to prove  that $P(t, \theta_{-t} \omega, \phi)(x)$ is a random semiflow . Take $\left\{\left(t_{n}, \phi_{n}\right)\right\}_{n \in \mathbb{N}} \subset \mathbb{R}_{+} \times D\left(\theta_{-t_{n}} \omega\right)$  such that $\lim _{n \rightarrow \infty}\left|t_{n}-t\right|=0$ and
$\lim _{n \rightarrow \infty} d_{r}\left(\phi_{n}, \phi\right)=0$. Denote by $G_n\triangleq |P(t_n, \theta_{-t_{n}} \omega, \phi_n)(x)-P(t, \theta_{-t} \omega, \phi)(x)|$. For any given a bounded closed interval $I$ and any $x\in I$,  we have
\begin{equation}\label{5.1}
\begin{aligned}
G_n&=\left|\Psi(t_n, \theta_{-t_{n}} \omega, \phi_n)(0)(x)-\Psi\left(t, \theta_{-t} \omega, \phi\right)(0)(x)\right|\\
&\leq\left|\Phi(t_n, \theta_{-t_{n}} \omega, \psi_n)(0)(x)-\Phi\left(t, \theta_{-t} \omega, \psi\right)(0)(x)\right|+|z(\theta_t\omega)(x)-z(\theta_{t_n}\omega)(x)|\ \\
&\leq|S(t_n) \psi_n(0,x)-S(t)\psi(0,x)|+|\int_{0}^{t_n} S(t_n-r)F\left(v^n_{r}+z\left(\theta_{-t_n}\theta_{r+\cdot} \omega\right)\right)(x)-\\
&\int_{0}^{t}S(t-r) F\left(v_{r}+z\left(\theta_{-t}\theta_{r+\cdot} \omega\right)\right)(x) dr|+|\int_{0}^{t_n} S(t_n-r) \Delta z\left(\theta_{-t_n}\theta_{r} \omega\right)(x) dr-\\
&\int_{0}^{t} S(t-r) \Delta z\left(\theta_{-t}\theta_{r} \omega\right)(x) dr|+|z(\theta_t\omega)(x)-z(\theta_{t_n}\omega)(x)| \\
& \triangleq I_1+I_2+I_3+I_4,
\end{aligned}
\end{equation}
for  $P$-a.e. $\omega\in \Omega$.
Now, we estimate each term on the right handside of  \eqref{5.1}.
\begin{equation}\label{5.2}
\begin{aligned}
I_1&\leqq |[S(t_n)-S(t)] \psi_n(0,x)|+|S(t)[\psi_n(0,x)-\psi(0,x)]|\\
&\leq\frac{\left(1+t_{n}\right) \exp \left(-\mu t_{n}\right)|\psi_n(0,x)|}{t_{n}}\left|t-t_{n}\right|+e^{-\mu t}|\psi_n(0,x)-\psi(0,x)|\\
&\triangleq I_{11}+I_{12}.
\end{aligned}
\end{equation}
It follows from $\lim _{n \rightarrow \infty} d_{r}\left(\phi_{n}, \phi\right)=0$, the continuity of $z(\theta_t\omega)$ with respect to $t$ and $\lim _{n \rightarrow \infty}\left|t_{n}-t\right|=0$ that $\lim _{n \rightarrow \infty}I_{12}=0$. Clearly, $I_{11}\rightarrow 0$ because of $t_n\rightarrow t$.
\begin{equation}\label{5.3}
\begin{aligned}
I_2&\leqq |\int_{0}^{t_n} S(t_n-r)F\left(v^n_{r}+z\left(\theta_{-t_n}\theta_{r+\cdot} \omega\right)\right)(x)dr-\int_{0}^{t} S(t_n-r)F\left(v^n_{r}+z\left(\theta_{-t_n}\theta_{r+\cdot} \omega\right)\right)(x)dr|+\\
& |\int_{0}^{t} S(t_n-r)F\left(v^n_{r}+z\left(\theta_{-t_n}\theta_{r+\cdot} \omega\right)\right)(x)dr-\int_{0}^{t} S(t_n-r)F\left(v_{r}+z\left(\theta_{-t_n}\theta_{r+\cdot} \omega\right)\right)(x)dr|+
\\&|\int_{0}^{t} S(t_n-r)F\left(v_{r}+z\left(\theta_{-t_n}\theta_{r+\cdot} \omega\right)\right)(x)dr-\int_{0}^{t}S(t-r) F\left(v_{r}+z\left(\theta_{-t_n}\theta_{r+\cdot} \omega\right)\right)(x) dr|+
\\&|\int_{0}^{t} S(t_n-r)F\left(v_{r}+z\left(\theta_{-t_n}\theta_{r+\cdot} \omega\right)\right)(x)dr-\int_{0}^{t}S(t-r) F\left(v_{r}+z\left(\theta_{-t}\theta_{r+\cdot} \omega\right)\right)(x) dr|\\
& \triangleq I_{21}+I_{22}+I_{23}+I_{24}.
\end{aligned}
\end{equation}
By Lemmas \ref{lem2.1} and \ref{lem3.2} and the boundedness of $f$, we have for any $x\in I$ and $P$-a.e. $\omega \in \Omega$
 \begin{equation}\label{5.4}
\begin{aligned}
I_{21}&\leq \varepsilon\int_{t}^{t_n}e^{-\mu (t_n-r)}f\left(v^n_{r}+z\left(\theta_{r+\cdot} \omega\right)\right)(x)dr\leq \varepsilon M \int_{t}^{t_n}e^{-\mu (t_n-r)}dr.
\end{aligned}
\end{equation}
Since $t_n\rightarrow t$, we can see $I_{21}\rightarrow 0$.
 \begin{equation}\label{5.5}
\begin{aligned}
I_{22}&\leq \varepsilon L_f \int_{0}^{t}e^{-\mu (t_n-r)}\|v^n_{r}-v_{r}\|_{\mathcal{C}_{co}}dr\leq \varepsilon L_f |v_{r}(\cdot,\theta_{-t_n}\omega, \psi_n)(x)-v_{r}(\cdot,\theta_{-t_n}\omega, \psi)(x)| \int_{t}^{t_n}e^{-\mu (t_n-r)}dr.
\end{aligned}
\end{equation}
By similar procedure as the proof of Theorem 2.8-(i) in \cite{YCWT}, we have  $I_{22}\rightarrow 0$ provided $\psi_n\rightarrow \psi$.
\begin{equation}\label{5.6}
\begin{aligned}
I_{23}&\leq \int_{0}^{t} [S(t_n-r)-S(t-r)]F\left(v_{r}+z\left(\theta_{r+\cdot} \omega\right)\right)dr \\
&= \int_{0}^{t_n-\sqrt{\delta}}[S(t_n-r)-S(t-r)]F\left(v_{r}+z\left(\theta_{r+\cdot} \omega\right)\right)dr+\int_{t_n-\sqrt{\delta}}^{t} [S(t_n-r)-S(t-r)]F\left(v_{r}+z\left(\theta_{r+\cdot} \omega\right)\right)dr\\
& \leq \varepsilon M  \int_{0}^{t_n-\sqrt{\delta}}\frac{\left(1+\mu (t_{n}-r)\right) \exp \left(-\mu(t_{n}-r)\right)}{t_{n}-r}(t_n-t)dr+2M|t-t_n+\sqrt{\delta}|\\
&\leq\varepsilon M\left|t-t_{n}\right|(\frac{1}{\sqrt{\delta}}+\mu)+2M|t-t_n+\sqrt{\delta}|,
\end{aligned}
\end{equation}
where $\delta\in (0,1)$ and $t<t_n<t+\delta$ with $\delta \rightarrow 0$. Hence, we have $I_{23}\rightarrow 0$ when $t_n\rightarrow t$. Therefore, $\lim _{n \rightarrow \infty}I_2=0$, as $t_n\rightarrow t, \phi_n\rightarrow \phi$.
In the following, we estimate $I_3$.
\begin{equation}\label{5.7}
\begin{aligned}
I_3&\leqq |\int_{0}^{t_n} S(t_n-r)\Delta z\left(\theta_{-t_n}\theta_{r} \omega\right)(x) -\int_{0}^{t}S(t-r) \Delta z\left(\theta_{-t_n}\theta_{r} \omega\right)(x)  dr|\\
&\leqq |\int_{0}^{t_n} S(t_n-r)\Delta z\left(\theta_{-t_n}\theta_{r} \omega\right)(x) dr-\int_{0}^{t} S(t_n-r)\Delta z\left(\theta_{-t_n}\theta_{r} \omega\right)(x) dr|+\\
& |\int_{0}^{t} S(t_n-r)\Delta z\left(\theta_{-t_n}\theta_{r} \omega\right)(x) dr-\int_{0}^{t} S(t-r)\Delta z\left(\theta_{-t_n}\theta_{r} \omega\right)(x) dr| +\\
& |\int_{0}^{t} S(t-r)\Delta z\left(\theta_{-t_n}\theta_{r} \omega\right)(x) dr-\int_{0}^{t} S(t-r)\Delta z\left(\theta_{-t}\theta_{r} \omega\right)(x) dr| \triangleq I_{31}+I_{32}+I_{33}.
\end{aligned}
\end{equation}
 \begin{equation}\label{5.8}
\begin{aligned}
I_{31}&\leq c |\Delta z\left(\theta_{r} \omega\right)(x)|\int_{t}^{t_n}e^{-\mu (t_n-r)} e^{\mu (t_n-r)/2}r(\omega)dr.
\end{aligned}
\end{equation}
Since $\{g_j(x)\}_{j=1}^{m}$ are twice continuously differentiable, there exists $M>0$ such that for any $x\in I$ and  $P$-a.e. $\omega$, $|\Delta z\left(\theta_{r} \omega\right)(x)|\leqslant M$. Thus, $\lim _{n \rightarrow \infty}I_{31}=0$ and $\lim _{n \rightarrow \infty}I_{31}=0$ in the case $t_n\rightarrow t$. It follows from $\Delta z\left(\theta_{t} \omega\right)$ is continuous with respect to $t$ that $\lim _{n \rightarrow \infty}I_{33}=0$. By the same arguments as the estimation of $I_{23}$ in \eqref{5.6}, we have $\lim _{n \rightarrow \infty}I_{32}=0$, indicating that $\lim _{n \rightarrow \infty}I_{3}=0$. Moreover, since   $z_{j}\left(\theta_{t} \omega_{j}\right)$ is $P$-a.e. $\omega$ continuous, we have that $\lim _{n \rightarrow \infty}I_4=0$.
Summing up the above computation together with  Lemma \ref{lem5.1}, we can see that  $\Phi\left(t, \theta_{-t} \omega, \phi\right)$ is a continuous random semiflow  with respect to the compact open topology induced by the norm $\|\cdot\|_{co}$.
\end{proof}

Now, we are in the position to prove the $\mathcal{D}$-pullback asymptotically compact in $\mathcal{D}$ with respect to $\|\cdot\|_{C}^{co}$.

\begin{lem}\label{lem4.4}
Assume that  $(\mathbf{H})$ holds and $f$ is bounded. Then, the RDS $\Psi$ generated by SNDRDE \eqref{4} is $\mathcal{D}$-pullback asymptotically compact in $\mathcal{C}_{co}$ for $t>\tau$, i.e., for $P$-a.e. $\omega \in \Omega$, the sequence $\{\Psi(t_{n}, \theta_{-t_{n}} \omega,\phi_n\left(\theta_{-t_{n}} \omega\right))\}$ has a convergent subsequence in $\mathcal{C}_{co}$ provided $t_{n} \rightarrow \infty$, $B=\{B(\omega)\}_{\omega \in \Omega} \in \mathcal{D}$ and $\phi_n\left(\theta_{-t_{n}} \omega\right) \in B\left(\theta_{-t_{n}} \omega\right)$
\end{lem}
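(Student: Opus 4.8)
The plan is to combine the random absorbing set of Lemma~\ref{lem4.1} with an Arzel\`{a}--Ascoli argument phrased in the compact open topology through Lemma~\ref{lem5.1}. Write $\varphi^{n}:=\Psi\big(t_{n},\theta_{-t_{n}}\omega,\phi_{n}(\theta_{-t_{n}}\omega)\big)$ and recall $\Psi(t,\omega,\phi)=v_{t}(\cdot,\omega,\psi)+z(\theta_{t+\cdot}\omega)$ with $\psi(\xi,\cdot)=\phi(\cdot,\xi)-z(\theta_{\xi}\omega)$. Since $t_{n}\to\infty$ and $\phi_{n}(\theta_{-t_{n}}\omega)\in B(\theta_{-t_{n}}\omega)$ with $B\in\mathcal{D}$ tempered, Lemma~\ref{lem4.1} gives $\varphi^{n}\in K(\omega)$ once $t_{n}\ge T_{B}(\omega)$; moreover, using \eqref{3.9} and the temperedness of $B$ one has $e^{-\mu t_{n}}\|\psi_{n}(\theta_{-t_{n}}\omega)(0)\|\to0$, so the pullback-boundedness estimate \eqref{3.35} of Theorem~\ref{thm1} (applicable since $f$ is bounded here) yields a tempered radius $R(\omega)$ with $\|\varphi^{n}\|_{\mathcal{C}}\le R(\omega)$ for all large $n$. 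Hence the tail of $\{\varphi^{n}\}$ lies in the ball $B_{R(\omega)}$ of Lemma~\ref{lem5.1}, and by part~(ii) of that lemma it remains only to show that for each bounded interval $I=[0,i]$ the restrictions $\{\varphi^{n}|_{[-\tau,0]\times I}\}$ are equicontinuous; the finitely many $\varphi^{n}$ with $t_{n}<T_{B}(\omega)$ are individually uniformly continuous on the compact set $[-\tau,0]\times I$, so only the large-$n$ tail needs to be controlled.

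For $n$ large and $\zeta\in[-\tau,0]$ we split $\varphi^{n}(\zeta)(x)=z(\theta_{\zeta}\omega)(x)+v(t_{n}+\zeta,\theta_{-t_{n}}\omega,\psi_{n})(x)$; the first summand is a fixed, hence uniformly continuous, function on $[-\tau,0]\times I$, and the second is analysed via the variation-of-constants formula \eqref{3.13} as the sum of $S(t_{n}+\zeta)[\psi_{n}(0)]$, the nonlinear term $\int_{0}^{t_{n}+\zeta}S(t_{n}+\zeta-r)F\big(v_{r}+z(\theta_{r+\cdot}\theta_{-t_{n}}\omega)\big)\,dr$, and the noise term $\int_{0}^{t_{n}+\zeta}S(t_{n}+\zeta-r)\Delta z(\theta_{r}\theta_{-t_{n}}\omega)\,dr$. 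Comparing two arguments $(\zeta_{1},x_{1})$ and $(\zeta_{2},x_{2})$ with, say, $t_{1}:=t_{n}+\zeta_{1}\le t_{2}:=t_{n}+\zeta_{2}$ and $\delta:=|t_{2}-t_{1}|+|x_{2}-x_{1}|\in(0,1)$: for the linear term, Lemma~\ref{lem2.1}(iv) with $\min\{t_{1},t_{2}\}\ge t_{n}-\tau$ together with $e^{-\mu(t_{n}-\tau)}\|\psi_{n}(0)\|\le1$ (true for large $n$) gives a Lipschitz bound $\lesssim(\mu+(t_{n}-\tau)^{-1})|t_{2}-t_{1}|+(\pi(t_{n}-\tau))^{-1/2}|x_{2}-x_{1}|$ with constant uniform in $n$. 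For the nonlinear and noise terms I would use exactly the $\sqrt{\delta}$-splitting $\int_{0}^{t_{1}}=\int_{0}^{t_{1}-\sqrt{\delta}}+\int_{t_{1}-\sqrt{\delta}}^{t_{1}}$ plus the slice $\int_{t_{1}}^{t_{2}}$ already used in \eqref{3.17}, which is legitimate since $\sqrt{\delta}<1\le t_{1}$ for large $n$; on $[0,t_{1}-\sqrt{\delta}]$ one controls $\frac{1+\mu(t_{1}-r)}{t_{1}-r}e^{-\mu(t_{1}-r)}$ and $\frac{e^{-\mu(t_{1}-r)}}{\sqrt{\pi(t_{1}-r)}}$ using $t_{1}-r\ge\sqrt{\delta}$ and integrates the exponential decay over $[0,\infty)$, while on the two short slices one simply bounds the integrands, the total being $\lesssim\sqrt{\delta}+\delta^{3/4}\to0$. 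Here $\|F(v_{r}+z(\cdots))\|\le\varepsilon M$ by the boundedness of $f$ and Lemma~\ref{lem3.2}, while $\|\Delta z(\theta_{r-t_{n}}\omega)\|\le c\,e^{\frac{\mu}{4}(t_{n}-r)}\sqrt{r(\omega)}$ by \eqref{3.9} and the twice continuous differentiability of the $g_{j}$; writing $t_{n}-r\le\tau+(t_{1}-r)$, the growth $e^{\frac{\mu}{4}(t_{n}-r)}$ is absorbed by the semigroup decay $e^{-\mu(t_{1}-r)}$, leaving $e^{-\frac{3\mu}{4}(t_{1}-r)}$ times a constant depending only on $\tau$ and $\omega$, so all the integrals converge and the resulting moduli of continuity are uniform in $n$. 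This yields equicontinuity on $[-\tau,0]\times I$.

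Finally, for each $i\in\mathbb{N}$, boundedness and equicontinuity on $[-\tau,0]\times[0,i]$ and the Arzel\`{a}--Ascoli theorem produce a subsequence converging uniformly there; a diagonal extraction over $i=1,2,\dots$ gives a single subsequence $\{\varphi^{n_{k}}\}$ converging uniformly on every $[-\tau,0]\times[0,i]$ to a limit $\varphi$, which is bounded by $R(\omega)$ and uniformly continuous on $\mathbb{R}_{+}$ because the spatial moduli above do not depend on $i$, hence $\varphi\in\mathcal{C}$; by Lemma~\ref{lem5.1}(i) this uniform-on-compacts convergence is precisely convergence of $\varphi^{n_{k}}$ to $\varphi$ in $\mathcal{C}_{co}$, which is the asserted $\mathcal{D}$-pullback asymptotic compactness. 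The crux of the argument is the equicontinuity step: it must be made uniform in $n$ even though $t_{n}\to\infty$, which forces one to use the largeness of $t_{n}$ twice --- once to kill the initial-datum contribution through $e^{-\mu t_{n}}$ and temperedness, once to validate the $\sqrt{\delta}$-splitting --- and simultaneously to balance the growth $e^{\frac{\mu}{4}(t_{n}-r)}$ of $\|\Delta z(\theta_{r-t_{n}}\omega)\|$ against the analytic-semigroup decay, the non-integrable singularity $\sim(t_{1}-r)^{-1}$ of the gradient estimate near $r=t_{1}$ being handled exactly as in \eqref{3.17}.
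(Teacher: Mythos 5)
Your proof is correct, but it is a genuinely different argument from the one printed in the paper, and in fact it supplies analytic content that the paper's own proof omits. The paper argues as follows: using the absorbing set $K$ of Lemma \ref{lem4.1} it writes $\Psi(t_{n_k},\theta_{-t_{n_k}}\omega,\phi_{n_k})=\Psi(k,\theta_{-k}\omega,\eta_k)$ with $\eta_k\in K(\theta_{-k}\omega)$, and then declares the latter sequence precompact ``by the assumption.'' This step only reduces asymptotic compactness over arbitrary tempered families to asymptotic compactness over the absorbing family $K\in\mathcal{D}$, which is an instance of the very statement being proved; the compactness itself --- which must come from the smoothing of $S(t)$ and the compact--open machinery --- is never established there. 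Your argument does exactly that missing work: sup-norm boundedness of the tail via \eqref{3.35} and temperedness (killing $e^{-\mu t_n}\|\psi_n(0)\|$), joint equicontinuity in $(\zeta,x)$ on $[-\tau,0]\times[0,i]$ uniformly in $n$ from Lemma \ref{lem2.1}(iii)--(iv) applied to the three terms of \eqref{3.13} with the $\sqrt{\delta}$-splitting of \eqref{3.17} handling the non-integrable $(t_1-r)^{-1}$ singularity, absorption of the growth $e^{\frac{\mu}{4}(t_n-r)}$ of $\|\Delta z(\theta_{r-t_n}\omega)\|$ into the semigroup decay, and then Lemma \ref{lem5.1} plus Arzel\`a--Ascoli and a diagonal extraction. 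What your route buys is a self-contained and verifiable proof; what it costs is length, and one point you should make explicit: Lemma \ref{lem5.1} characterizes precompactness only inside a \emph{sup-norm} ball $B_r$, while $K(\omega)$ and the temperedness of $B$ are phrased in the compact--open norm, so the passage from \eqref{3.35} to a uniform sup-norm radius $R(\omega)$ needs the sets of $\mathcal{D}$ to be tempered in the supremum norm (an ambiguity inherited from the paper's setup rather than a flaw of your argument). With that caveat stated, your proof stands.
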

\begin{proof}
Take an arbitrary random set $\{B(\omega)\}_{\omega \in \Omega} \in \mathcal{D}$, a sequence $t_{n} \rightarrow+\infty$ and $\phi_n \in B\left(\theta_{-t_{n}} \omega\right)$. We have to prove that $\left\{\Psi\left(t_{n}, \theta_{-t_{n}} \omega, \phi_n\right)\right\}$ is precompact. Since $\{K(\omega)\}$ is a random absorbing for $\Psi$, then there exists $T>0$ such that, for all $\omega \in \Omega$,
\begin{equation}\label{4.24}
\Psi\left(t, \theta_{-t} \omega\right) B\left(\theta_{-t} \omega\right) \subset K(\omega)
\end{equation}
for all $t \geq T$.
Because $t_{n} \rightarrow+\infty$, we can choose $n_{1} \geq 1$ such that $t_{n_{1}}-1 \geq T$. Applying \eqref{4.24} for $t=t_{n_{1}}-1$ and $\omega=\theta_{-1} \omega$, we find that
\begin{equation}\label{4.25}
\eta_{1} \triangleq \Psi\left(t_{n_{1}}-1, \theta_{-t_{n_{1}}} \omega, \phi_{n_{1}}\right)  \in K\left(\theta_{-1} \omega\right)
\end{equation}
Similarly, we can choose a subsequence $\left\{n_{k}\right\}$ of $\{n\}$ such that $n_{1}<n_{2}<\cdots<n_{k} \rightarrow$ $+\infty$ such that
\begin{equation}\label{4.26}
\eta_{k} \triangleq \Psi\left(t_{n_{k}}-k, \theta_{-t_{n_{k}}} \omega, \phi_{n_{k}}\right)  \in K\left(\theta_{-k} \omega\right)
\end{equation}
Hence, by the assumption we conclude that
the sequence
\begin{equation}\label{4.27}
\left\{\Psi\left(k, \theta_{-k} \omega, \eta_{k}\right)\right\}
\end{equation}is precompact.
On the other hand by \eqref{4.26}, we have
\begin{equation}\label{4.28}
\begin{aligned}
\Psi(k, \theta_{-k} \omega, \eta_{k}) &=\Psi(k, \theta_{-k} \omega,\Psi(t_{n_{k}}-k, \theta_{-t_{n_{k}}} \omega, \phi_{n_{k}}) =\Psi\left(t_{n_{k}}, \theta_{-t_{n_{k}}} \omega,\phi_{n_{k}}\right),
\end{aligned}
\end{equation}
for all $k \geq 1$. Combining \eqref{4.27} and \eqref{4.28}, we obtain that the sequence $\left\{\Psi\left(t_{n_{k}}, \theta_{-t_{n_{k}}} \omega,\phi_{n_{k}}\right)\right\}$ is precompact. Therefore,  $\left\{\Psi\left(t_{n}, \theta_{t_{n}} \omega,\phi_{n_{k}}\right) \right\}$ is precompact, which completes the proof.
\end{proof}

Lemma \eqref{lem4.1} says that the continuous RDS $\Psi$ has a random absorbing set while Lemma \eqref{lem5.1} tells us that $(\theta, \Psi)$ is pullback asymptotically compact in $C_{co}$. Thus, it follows from Lemma \ref{lem1} that the continuous RDS $(\theta, \Psi)$ possesses a random attractor. Namely, we obtain the following result.

\begin{thm}\label{thm5.1} Assume that $(\mathbf{H})$ holds, $\varepsilon L_{f}e^{\mu \tau}-\mu<0$ and $f$ is bounded, then the continuous RDS $\Psi$ generated by  \eqref{4} admits a unique $\mathcal{D}$-pullback attractor in $\mathcal{C}_{co}$ belonging to the class $\mathcal{D}$.
\end{thm}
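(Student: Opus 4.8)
The plan is to obtain Theorem \ref{thm5.1} as a direct application of the abstract criterion in Lemma \ref{lem1}, so that all that remains is to check that its three hypotheses hold for the RDS $\Psi$ viewed on $\mathcal{C}_{co}$. Under the assumptions of Theorem \ref{thm5.1} (namely $(\mathbf{H})$, the contractivity condition $\varepsilon L_{f}e^{\mu\tau}-\mu<0$, and the global boundedness of $f$) the three preceding lemmas are all available: Lemma \ref{lem4.2} gives that $(\theta,\Psi)$ is a continuous random dynamical system with respect to the norm $\|\cdot\|_{co}^{\mathcal{C}}$; Lemma \ref{lem4.1} produces a $\mathcal{D}$-pullback absorbing set $\{K(\omega)\}_{\omega\in\Omega}\in\mathcal{D}$; and Lemma \ref{lem4.4} gives that $\Psi$ is $\mathcal{D}$-pullback asymptotically compact in $\mathcal{C}_{co}$. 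Hence the proof is essentially an assembly step.

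First I would upgrade the absorbing set to a \emph{closed} measurable one in the sense of Definition \ref{defn5}. The family $\{K(\omega)\}_{\omega\in\Omega}$ defined by \eqref{4.14} already belongs to $\mathcal{D}$ by Lemma \ref{lem4.1}, so it is a tempered (hence measurable) random set; moreover each $K(\omega)$ is, by construction, the closed ball of $\mathcal{C}_{co}$ centered at $0$ with radius $2ce^{\mu\tau}r(\omega)+\frac{c\varepsilon e^{\mu\tau}L_f}{\mu-\varepsilon e^{\mu\tau}L_f}e^{-\varepsilon e^{\mu\tau}L_f}r(\omega)+c_1(\omega)$, which is a closed nonempty subset of $\mathcal{C}_{co}$ (a closed ball in any normed space is closed). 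Combining this with the absorption property already established in Lemma \ref{lem4.1}, $K=\{K(\omega)\}_{\omega\in\Omega}$ is a closed measurable $\mathcal{D}$-pullback absorbing set for $\Psi$.

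With the continuity (Lemma \ref{lem4.2}), the closed $\mathcal{D}$-absorbing set (previous paragraph), and the $\mathcal{D}$-pullback asymptotic compactness (Lemma \ref{lem4.4}) in hand, Lemma \ref{lem1} applies verbatim and produces a $\mathcal{D}$-pullback random attractor
\begin{equation*}
\mathcal{A}(\omega)=\bigcap_{s\geq 0}\overline{\bigcup_{t\geq s}\Psi\left(t,\theta_{-t}\omega,K\left(\theta_{-t}\omega\right)\right)},
\end{equation*}
which is a compact random set satisfying the invariance $\Psi(t,\omega)\mathcal{A}(\omega)=\mathcal{A}(\theta_t\omega)$ and the pullback attracting property of Definition \ref{defn7}. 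Since $\mathcal{A}(\omega)\subseteq K(\omega)$ for $P$-a.e.\ $\omega$ and $K\in\mathcal{D}$, the family $\mathcal{A}$ inherits temperedness, so $\mathcal{A}\in\mathcal{D}$. Uniqueness within $\mathcal{D}$ then follows from the standard argument: if $\mathcal{A}_1$ and $\mathcal{A}_2$ are two $\mathcal{D}$-pullback attractors from the class $\mathcal{D}$, applying the pullback attracting property of $\mathcal{A}_2$ to the family $\mathcal{A}_1\in\mathcal{D}$ together with the invariance of $\mathcal{A}_1$ gives $\operatorname{dist}\left(\mathcal{A}_1(\omega),\mathcal{A}_2(\omega)\right)=0$, and by symmetry and the compactness (hence closedness) of both sets one concludes $\mathcal{A}_1(\omega)=\mathcal{A}_2(\omega)$ for $P$-a.e.\ $\omega$.

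At the level of Theorem \ref{thm5.1} itself there is no genuine analytic obstacle remaining: all the real difficulty has already been absorbed into the a priori estimate of Lemma \ref{lem4.1}, the continuity in the compact open topology of Lemma \ref{lem4.2}, and the ``cut off the last unit of time and use the absorbing set'' compactness argument of Lemma \ref{lem4.4}. The only point that deserves a line of care here is the verification in the second paragraph that the absorbing ball is closed, measurable and tempered with respect to $\|\cdot\|_{co}^{\mathcal{C}}$, which ultimately rests on the temperedness estimates \eqref{3.7}--\eqref{3.9} for $r(\omega)$.
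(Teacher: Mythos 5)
Your proposal is correct and follows exactly the paper's own route: the paper likewise obtains Theorem \ref{thm5.1} by combining the absorbing set from Lemma \ref{lem4.1}, the continuity from Lemma \ref{lem4.2}, and the pullback asymptotic compactness from Lemma \ref{lem4.4}, and then invoking the abstract criterion of Lemma \ref{lem1}. Your added verifications (closedness and temperedness of the absorbing ball, and the standard uniqueness argument) are details the paper leaves implicit, not a different approach.
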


\section{Existence of exponentially attracting stationary solutions}

In this section, we are devoted to  deriving sufficient conditions that guarantee  the random attractor being  an exponentially attracting random fixed point $\xi^{*}$ by adopting the general Banach fixed point theorem. We first introduce the general Banach fixed point theorem, which was established in \cite{26} and extended in \cite{DLS03} to  infinite case in the following.
\begin{lem}\label{lem6.1}Let $(Y, d_Y)$ be a complete metric space with bounded metric. Suppose that
$$
\Phi(t, \omega, Y) \subset Y
$$
for $\omega \in \Omega, t \geq 0$, and that $x \rightarrow \Phi(t, \omega, x)$ is continuous. In addition, we assume the contraction condition: There exists a constant $k<0$ such that, for $\omega \in \Omega$,
$$
\sup _{x \neq y \in Y} \log \frac{d_Y(\Phi(1, \omega, x), \Phi(1, \omega, y))}{d_Y(x, y)} \leq k
$$
Then $\Phi$ has a unique generalized fixed point $\gamma^{*}$ in $Y$. Moreover, the following convergence property holds:
$$
\lim _{t \rightarrow \infty} \Phi\left(t, \theta_{-t} \omega, x\right)=\gamma^{*}(\omega)
$$
for any $\omega \in \Omega$ and $x \in Y$.
\end{lem}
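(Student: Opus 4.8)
The plan is to regard the time-one map $\Phi(1,\omega,\cdot)\colon Y\to Y$ as a genuine contraction with Lipschitz constant $e^{k}<1$ and to bootstrap this to all times by repeated use of the cocycle identity $\Phi(t+s,\omega,\cdot)=\Phi(t,\theta_{s}\omega,\cdot)\circ\Phi(s,\omega,\cdot)$, using the boundedness of $d_Y$ to control initial data uniformly in $\omega$. First I would prove, by induction on $n\in\mathbb{N}$, that
\[
d_Y\bigl(\Phi(n,\omega,x),\Phi(n,\omega,y)\bigr)\le e^{nk}\,d_Y(x,y)\qquad\text{for all }\omega\in\Omega,\ x,y\in Y;
\]
the base case is precisely the hypothesis, and the inductive step follows from $\Phi(n+1,\omega,\cdot)=\Phi(1,\theta_{n}\omega,\cdot)\circ\Phi(n,\omega,\cdot)$ together with the one-step estimate applied at the point $\theta_{n}\omega$. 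Writing a general time as $t=\lfloor t\rfloor+r$ with $r\in[0,1)$ and using $\Phi(t,\omega,\cdot)=\Phi(\lfloor t\rfloor,\theta_{r}\omega,\cdot)\circ\Phi(r,\omega,\cdot)$, the same computation gives $d_Y(\Phi(t,\omega,x),\Phi(t,\omega,y))\le e^{\lfloor t\rfloor k}\operatorname{diam}(Y)\to0$ as $t\to\infty$, which is where the boundedness of the metric is used.

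Next I would construct $\gamma^{*}$ as a pullback limit. Fixing any $x_{0}\in Y$ and setting $y_{n}(\omega):=\Phi(n,\theta_{-n}\omega,x_{0})$, rewriting $\theta_{-(n+1)}\omega=\theta_{-n}(\theta_{-1}\omega)$ and applying the cocycle property in the form $\Phi(n+1,\theta_{-(n+1)}\omega,x_{0})=\Phi(n,\theta_{-n}\omega,\Phi(1,\theta_{-(n+1)}\omega,x_{0}))$ together with the iterate estimate yields
\[
d_Y\bigl(y_{n+1}(\omega),y_{n}(\omega)\bigr)\le e^{nk}\,d_Y\bigl(\Phi(1,\theta_{-(n+1)}\omega,x_{0}),x_{0}\bigr)\le e^{nk}\operatorname{diam}(Y).
\]
Since $k<0$, the series $\sum_{n}e^{nk}$ converges, so $\{y_{n}(\omega)\}$ is Cauchy and converges in the complete space $Y$ to a limit $\gamma^{*}(\omega)$, which is measurable as a pointwise limit of the measurable maps $\omega\mapsto\Phi(n,\theta_{-n}\omega,x_{0})$. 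To obtain the stated convergence for an arbitrary starting point and for real time, I would write $t=n+r$ with $r\in[0,1)$, use $\Phi(t,\theta_{-t}\omega,x)=\Phi(n,\theta_{-n}\omega,\Phi(r,\theta_{-(n+r)}\omega,x))$, and compare with $y_{n}(\omega)$ via the iterate estimate:
\[
d_Y\bigl(\Phi(t,\theta_{-t}\omega,x),y_{n}(\omega)\bigr)\le e^{nk}\,d_Y\bigl(\Phi(r,\theta_{-(n+r)}\omega,x),x_{0}\bigr)\le e^{nk}\operatorname{diam}(Y)\longrightarrow 0\ \text{ as }\ t\to\infty,
\]
so $\Phi(t,\theta_{-t}\omega,x)\to\gamma^{*}(\omega)$; in particular $\gamma^{*}$ does not depend on the choice of $x_{0}$.

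It then remains to verify invariance and uniqueness. For invariance, continuity of $\Phi(t,\omega,\cdot)$ and the cocycle property give
\[
\Phi(t,\omega,\gamma^{*}(\omega))=\lim_{s\to\infty}\Phi(t,\omega,\Phi(s,\theta_{-s}\omega,x_{0}))=\lim_{s\to\infty}\Phi\bigl(t+s,\theta_{-(t+s)}(\theta_{t}\omega),x_{0}\bigr)=\gamma^{*}(\theta_{t}\omega),
\]
the last equality being the convergence property just established (applied with $\theta_{t}\omega$ in place of $\omega$). For uniqueness, if $\gamma$ is any generalized fixed point then $\gamma(\omega)=\Phi(t,\theta_{-t}\omega,\gamma(\theta_{-t}\omega))$ with $\gamma(\theta_{-t}\omega)\in Y$, so the iterate estimate gives $d_Y(\gamma(\omega),\Phi(t,\theta_{-t}\omega,x_{0}))\le e^{\lfloor t\rfloor k}\operatorname{diam}(Y)$, and letting $t\to\infty$ forces $\gamma=\gamma^{*}$.

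The step I expect to demand the most care is the bookkeeping of the cocycle identity in its pullback form: one must split base points such as $\theta_{-(n+1)}\omega=\theta_{-n}(\theta_{-1}\omega)$ and $\theta_{-s}\omega=\theta_{-(t+s)}(\theta_{t}\omega)$ so that they telescope to a fixed shift, and one must insert the bound $d_Y\le\operatorname{diam}(Y)$ exactly at the places where the \emph{remainder} data $\Phi(r,\theta_{-(n+r)}\omega,x)$ or $\gamma(\theta_{-t}\omega)$ appears, since those orbits carry no temperedness information and it is only the boundedness of the metric that keeps them under control.
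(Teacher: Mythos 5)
The paper does not prove this lemma at all: it is quoted as a known result from Schmalfuss \cite{26} and Duan--Lu--Schmalfuss \cite{DLS03}, so there is no in-paper argument to compare against. Your proof is correct and is essentially the standard argument from those references: the time-one contraction is iterated via the cocycle identity, the boundedness of $d_Y$ absorbs the fractional-time remainder and the initial data, the pullback sequence $\Phi(n,\theta_{-n}\omega,x_0)$ is shown to be Cauchy, and invariance and uniqueness follow from continuity plus the exponential contraction of iterates. The cocycle bookkeeping ($\theta_{-(n+1)}\omega=\theta_{-1}\theta_{-n}\omega$, $\theta_{-s}\omega=\theta_{-(t+s)}\theta_t\omega$) is handled correctly, and you rightly insert $\operatorname{diam}(Y)$ exactly where no contraction estimate is available; the only point worth making explicit is that measurability of $\gamma^*$ rests on the joint measurability of $\Phi$ assumed in the definition of an RDS, which you do invoke.
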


\begin{thm}\label{Theorem 6} Assume that $f$ is bounded and satisfies $\left(\mathbf{H}\right)$. Moreover, assume that $0<\tau<1$ and  $\mu>\max\{\frac{\varepsilon L_f}{1-\tau}, \varepsilon L_{f}e^{\mu \tau}\}$. Then the random dynamical system $\Psi$ generated by SNDRDE \eqref{4} possess a tempered random fixed point $\xi^{*}$, which is unique under all tempered random variables in $\mathcal{C}_{co}$  and  attracts exponentially fast every random variable in $\mathcal{C}_{co}$.
\end{thm}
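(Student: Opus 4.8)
The plan is to derive the statement from the abstract random fixed point theorem, Lemma \ref{lem6.1}, applied to the RDS $\Psi$. First I would fix the ambient space: take $Y(\omega)=\mathcal{A}(\omega)$, the pullback random attractor furnished by Theorem \ref{thm5.1}, which is available precisely because $f$ is bounded and $\mu>\varepsilon L_f e^{\mu\tau}$ is exactly the condition $\varepsilon L_f e^{\mu\tau}-\mu<0$. Being compact and invariant, $\mathcal{A}(\omega)$ is forward invariant for the cocycle, $\Psi(t,\omega)\mathcal{A}(\omega)=\mathcal{A}(\theta_t\omega)$, and it has finite diameter in the compact--open metric $\|\cdot-\cdot\|_{co}^{\mathcal{C}}$, so $(\mathcal{A}(\omega),\|\cdot-\cdot\|_{co}^{\mathcal{C}})$ is a complete bounded metric space as required by Lemma \ref{lem6.1}; continuity of $x\mapsto\Psi(1,\omega,x)$ with respect to this metric is Lemma \ref{lem4.2}. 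Since $\mathcal{A}\in\mathcal{D}$ is tempered, the generalized fixed point $\xi^{*}$ produced by Lemma \ref{lem6.1} is automatically a tempered random variable in $\mathcal{C}_{co}$, it satisfies the equivariance $\Psi(t,\omega,\xi^{*}(\omega))=\xi^{*}(\theta_t\omega)$, and Lemma \ref{lem6.1} yields $\Psi(t,\theta_{-t}\omega,\phi)\to\xi^{*}(\omega)$ for every $\phi$ in the attractor.

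The core of the proof is checking the contraction hypothesis of Lemma \ref{lem6.1} at $t=1$. Given two data $\phi^{1},\phi^{2}$, set $\psi^{i}=\phi^{i}-z(\theta_{\cdot}\omega)$; since the $z$ and $\Delta z$ terms in \eqref{3.13} do not depend on the datum, $u^{1}-u^{2}=v^{1}-v^{2}$ and, pointwise in $x$,
\[
u^{1}(t,x)-u^{2}(t,x)=S(t)\big[\phi^{1}(0)-\phi^{2}(0)\big](x)+\varepsilon\int_{0}^{t}S(t-s)\,K\big[f(u^{1}(s-\tau,\cdot))-f(u^{2}(s-\tau,\cdot))\big](x)\,\mathrm{d}s .
\]
Using the decay of $S(t)$ from Lemma \ref{lem2.1}(i),(iii), the bound $\|K\|\le1$ of Lemma \ref{lem3.2}, and the Lipschitz estimate \eqref{3.3a}, then passing to the $\mathcal{C}_{co}$-norm exactly as in the derivation of \eqref{4.2}--\eqref{4.3}, I would obtain an inequality of the form
\[
\|u^{1}_{t}-u^{2}_{t}\|_{co}^{\mathcal{C}}\le e^{\mu\tau}e^{-\mu t}\|\phi^{1}-\phi^{2}\|_{co}^{\mathcal{C}}+\varepsilon L_f e^{\mu\tau}\int_{0}^{t}e^{\mu(s-t)}\|u^{1}_{s}-u^{2}_{s}\|_{co}^{\mathcal{C}}\,\mathrm{d}s ,
\]
to which the Grönwall inequality of Lemma \ref{lem2.2} applies. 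Combined with a more careful treatment of the delayed integral (splitting $[0,t]$ at $s=\tau$ and using that $u^{1}-u^{2}$ coincides with $\phi^{1}-\phi^{2}$ on the initial history), and tracing the constants, this produces $\|\Psi(1,\omega,\phi^{1})-\Psi(1,\omega,\phi^{2})\|_{co}^{\mathcal{C}}\le k_{0}\,\|\phi^{1}-\phi^{2}\|_{co}^{\mathcal{C}}$ with $k_{0}<1$ under the standing hypothesis $0<\tau<1$ and $\mu>\max\{\varepsilon L_f/(1-\tau),\varepsilon L_f e^{\mu\tau}\}$; hence $k:=\log k_{0}<0$ and the contraction condition holds uniformly in $\omega$. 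Lemma \ref{lem6.1} then delivers the unique tempered fixed point $\xi^{*}$ together with the pullback convergence.

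To upgrade this to exponential attraction of every tempered random variable $\phi\in\mathcal{C}_{co}$ I would iterate the same estimate over integer times, $\|\Psi(n,\omega,\phi)-\Psi(n,\omega,\xi^{*}(\omega))\|_{co}^{\mathcal{C}}\le k_{0}^{\,n}\|\phi-\xi^{*}(\omega)\|_{co}^{\mathcal{C}}$, fill in non-integer $t$ by the short-time bound coming from the same Grönwall argument, and substitute $\omega\mapsto\theta_{-t}\omega$, using $\Psi(t,\theta_{-t}\omega,\xi^{*}(\theta_{-t}\omega))=\xi^{*}(\omega)$ to get $\|\Psi(t,\theta_{-t}\omega,\phi)-\xi^{*}(\omega)\|_{co}^{\mathcal{C}}\le Ce^{-\kappa t}\|\phi(\theta_{-t}\omega)-\xi^{*}(\theta_{-t}\omega)\|_{co}^{\mathcal{C}}$ for some $\kappa>0$; since both $\phi$ and $\xi^{*}$ are tempered the right-hand side tends to $0$ exponentially fast. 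Uniqueness of $\xi^{*}$ among tempered random variables follows from the same estimate: any other tempered invariant random variable $\eta$ satisfies $\|\xi^{*}(\omega)-\eta(\omega)\|_{co}^{\mathcal{C}}\le Ce^{-\kappa t}\|\xi^{*}(\theta_{-t}\omega)-\eta(\theta_{-t}\omega)\|_{co}^{\mathcal{C}}\to0$.

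I expect the main obstacle to be precisely this contraction estimate in the compact--open topology. Unlike the Hilbert-space settings of \cite{17,16,18,LG,25} there is no inner product to exploit, and both the diffusion semigroup $S(t)$ (through its heat kernel) and the nonlocal operator $K$ spread spatial information across all scales, so controlling $\|S(t)\varphi\|_{co}^{X}$ and $\|K\varphi\|_{co}^{X}$ and, above all, tracking how the time-shift by $\tau$ in the nonlocal delayed term competes against the exponential decay $e^{-\mu t}$ so that the Grönwall constant at $t=1$ remains strictly below $1$, is the delicate point; this is exactly where the quantitative conditions $0<\tau<1$ and $\mu>\max\{\varepsilon L_f/(1-\tau),\varepsilon L_f e^{\mu\tau}\}$ are consumed.
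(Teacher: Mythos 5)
Your proposal follows essentially the same route as the paper: verify the hypotheses of Lemma \ref{lem6.1} for the conjugated cocycle, with the decisive step being the Gr\"{o}nwall-based contraction estimate at $t=1$ obtained from the mild formulation \eqref{3.13}, Lemma \ref{lem2.1}, Lemma \ref{lem3.2} and the Lipschitz bound \eqref{3.3a}, which is exactly where the paper consumes $0<\tau<1$ and $\mu>\varepsilon L_f/(1-\tau)$. The only (minor, and arguably cleaner) structural difference is that you take $Y(\omega)=\mathcal{A}(\omega)$ so that the ``complete bounded metric space'' hypothesis of Lemma \ref{lem6.1} is satisfied literally and then iterate the contraction to attract arbitrary tempered data, whereas the paper applies the lemma directly on all of $\mathcal{C}_{co}$ after checking its invariance via \eqref{4.8}.
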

\begin{proof}
If $\mu>\varepsilon L_{f}e^{\mu \tau}$, then the conditions of Theorem \ref{thm5.1} hold and hence \eqref{4} possess random attractors in $\mathcal{C}_{co}$. We will prove that  \eqref{4} admits a unique globally exponentially attracting random stationary solution in $\mathcal{C}_{co}$, which immediately implies the random attractor in $\mathcal{C}_{co}$ obtained in Theorem \ref{thm5.1} is the random fixed point. If suffices to prove that  the RDS $\Phi$  generated by \eqref{3.10} has a unique exponentially attracting generalized fixed point $\chi^{*}$. Since  the transformation $v(t)=u(t)-z\left(\theta_{t} \omega\right)$ and $u(t)=v(t)+z\left(\theta_{t} \omega\right)$ are  conjugation,  one can see that $\xi^{*}=\chi^{*}+z\left(\theta_{t} \omega\right)$ is a unique exponentially attracting generalized fixed point of \eqref{4} by conjugation technique.

By  \eqref{4.3} and the Gr\"{o}nwall inequality, one can see that for any $\psi \in \mathcal{C}_{co}$
\begin{equation}\label{4.8}
\begin{aligned}
\left\|v_{t}\left(\cdot,  \omega, \psi\left(\omega\right)\right)\right\|_{co}^{\mathcal{C}} \leq & ce^{\mu \tau}r(\omega)+e^{-\mu t}e^{\mu \tau}\left\|\psi\left( \omega\right)\right\|_{co}^{\mathcal{C}}+\left\|\psi\left( \omega\right)\right\|_{co}^{\mathcal{C}}(e^{(\varepsilon L_{f}e^{\mu \tau}-\mu)t}-1)\\  &+\frac{c\varepsilon e^{\mu\tau}L_{f}}{\mu-\varepsilon e^{\mu\tau}L_{f}}[e^{-\varepsilon e^{\mu\tau}L_{f}}-e^{-\mu t}]r(\omega),
\end{aligned}
\end{equation}
which implies that for any $\psi \in \mathcal{C}_{co}$, $\Phi(t,\omega,\psi )\in \mathcal{C}_{co}$, i.e. $\mathcal{C}_{co}$ is invariant under the random semiflow $\Phi$. Moreover, it follows from Lemma \ref{lem4.2} that $\Phi$ is continuous in $\mathcal{C}$.
Therefore, we only need to prove the contraction property. That is, there exists $k<0$ such that
 \begin{equation}\label{6.1}
\begin{aligned}
\sup _{\varphi \neq \psi \in \mathcal{C}_{co}}\|\Phi(1, \omega, \varphi)-\Phi(1, \omega, \psi)\|_{co}^{\mathcal{C}}\leq e^k\|\varphi-\psi\|_{co}^{\mathcal{C}}.
\end{aligned}
\end{equation}
Hence, it suffices to prove that for any $\varphi,\psi\in \mathcal{C}$
 \begin{equation}\label{6.2}
\begin{aligned}
\|\Phi(1, \omega, \varphi)-\Phi(1, \omega, \psi)\|_{co}^{\mathcal{C}}=\|v_{1}(\cdot, \omega,\varphi)-v_{1}(\cdot, \omega,\psi)\|_{co}^{\mathcal{C}}\leq e^k\|\varphi(\zeta,x)-\psi(\zeta,x)\|_{co}^{\mathcal{C}}.
\end{aligned}
\end{equation}
By Eq. \eqref{3.13}, we have for any $\varphi,\psi\in \mathcal{C}$
 \begin{equation}\label{6.3}
\begin{aligned}
\|v_{1}(\cdot, \omega,\varphi)-v_{1}(\cdot, \omega,\psi)\|_{co}^{\mathcal{C}}& \leq \|S(1)[\varphi(0)- \psi(0)]\|+\\
&\sup _{\zeta\in[-\tau,0]} \int_{0}^{1+\zeta} S(1+\zeta-r)[F\left(v^\varphi_{r}+z\left(\theta_{r+\cdot} \omega\right)\right)- F\left(v^\phi_{r}+z\left(\theta_{r+\cdot} \omega\right)\right)] d r\\
& \leq  e^{\mu \tau}[e^{-\mu}\|\phi-\psi\|_{co}^{\mathcal{C}}+  \varepsilon L_{f}\int_{0}^{1} e^{-\mu (1-r)}\left\|v_{r}(\cdot, \omega,\phi)-v_{r}(\cdot, \omega,\psi)\right\|_{co}^{\mathcal{C}} \mathrm{d}r].
\end{aligned}
\end{equation}
Multiply both sides of \eqref{6.3} by $e^\mu$ leads to
 \begin{equation}\label{6.3b}
\begin{aligned}
e^u\|v_{1}(\cdot, \omega,\varphi)-v_{1}(\cdot, \omega,\psi)\|_{co}^{\mathcal{C}}
& \leq  e^{\mu \tau}[\|\varphi-\psi\|_{co}^{\mathcal{C}}+  \varepsilon L_{f}\int_{0}^{1} e^{\mu r}\left\|v_{r}(\cdot, \omega,\varphi)-v_{r}(\cdot, \omega,\psi)\right\|_{co}^{\mathcal{C}} \mathrm{d}r].
\end{aligned}
\end{equation}
Again, the Gr\"{o}nwall inequality gives rise to
 \begin{equation}\label{6.4}
\begin{aligned}
e^u\|v_{1}(\cdot, \omega,\varphi)-v_{1}(\cdot, \omega,\psi)\|_{co}^{\mathcal{C}}& \leq   e^{\mu \tau+\varepsilon L_{f}}\|\varphi-\psi\|_{co}^{\mathcal{C}},
\end{aligned}
\end{equation}
implying that $\|v_\varphi-v_\psi\|_{co}^{\mathcal{C}}\leq e^{\mu (\tau-1)+\varepsilon L_{f}}\|\phi-\psi\|_{co}^{\mathcal{C}}$. Thus, $\mu> \frac{\varepsilon L_f}{1-\tau}$ means that $\mu (\tau-1)+\varepsilon L_{f}<0$, indicating that \eqref{6.1} satisfies. Therefore,  $\Phi(t,\omega, \cdot)$ admits a random exponentially attracting generalized fixed point $\chi^{*}$ and $\xi^{*}=\chi^{*}+z\left(\theta_{t} \omega\right)$ is a unique exponentially attracting generalized fixed point of \eqref{4}.
This completes the proof.

\end{proof}

\section{Summary}
In this paper, we have obtained the existence and qualitative property of random attractors for \eqref{4} on a the simi-infinite interval $\mathbb{R}_+$. We show that under certain conditions, the random attractor is a  globally exponentially attracting random stationary solution. From dynamical system theory, the conditions for the attractors being fixed point are so strong that could be hardly met in the real world applications. Indeed, from the dissipative system theory, if some estimates on the dimension of random attractors can be given, it will benefit the researchers a lot in studying the structure of the random attractor. Nevertheless, the lack of inner product of the phase space and the asymmetry as well as noncompactness of  spatial domain causing this problem quite challenging and deserving further studies. Furthermore, in order to obtain the  global complex dynamics and nonlocal analysis of the qualitative properties of the system, existence and structure  of the associated invariant manifolds of the stationary solutions, the existence of connecting orbits (including the heteroclinic orbits or homoclinic orbits) are all of great significance and deserve  much attention.

\section{Acknowledgement}
This work was jointly supported by the National Natural Science Foundation of China (62173139), China Postdoctoral Science Foundation (2019TQ0089), Hunan Provincial Natural Science Foundation of China (2020JJ5344) the Science and Technology Innovation Program of Hunan Province (2021RC4030), the Scientific Research Fund of Hunan Provincial Education Department (20B353). \\
The research of T. Caraballo has been partially supported by Spanish Ministerio de Ciencia e
Innovaci\'{o}n (MCI), Agencia Estatal de Investigaci\'{o}n (AEI), Fondo Europeo de
Desarrollo Regional (FEDER) under the project PID2021-122991NB-C21 and the Junta de Andaluc\'{i}a
and FEDER under the project P18-FR-4509.

\small

\end{document}